\newtheorem{theorem}{\bf Theorem}[section]
\newtheorem{claim}[theorem]{\indent Claim}
\newtheorem{proposition}[theorem]{Proposition}
\begin{document}
\title{A generalization of Stiebitz-type results on graph decomposition}
\author{Qinghou Zeng\footnote{Center for Discrete Mathematics, Fuzhou University, Fujian 350003, China. Email: zengqh@fzu.edu.cn. Research supported by Youth Foundation of Fujian Province (Grant No. JAT190021).
}~~~
Chunlei Zu\footnote{School of CyberScience and School of Mathematical Sciences, University of Science and Technology of China, Hefei, Anhui 230026, China. Email: zucle@mail.ustc.edu.cn (corresponding author). Research supported in part by National Natural Science Foundation of China grant 11622110 and Anhui Initiative in Quantum Information Technologies grant AHY150200.}}

\date{}

\maketitle

\begin{abstract}

In this paper, we consider the decomposition of multigraphs under minimum degree constraints and give a unified generalization of several results by various researchers. Let $G$ be a multigraph in which no quadrilaterals share edges with triangles and other quadrilaterals and let $\mu_G(v)=\max\{\mu_G(u,v):u\in V(G)\setminus\{v\}\}$, where $\mu_G(u,v)$ is the number of edges joining $u$ and $v$ in $G$. We show that for any two functions $a,b:V(G)\rightarrow\mathbb{N}\setminus\{0,1\}$, if $d_G(v)\ge a(v)+b(v)+2\mu_G(v)-3$ for each $v\in V(G)$, then there is a partition $(X,Y)$ of $V(G)$ such that $d_X(x)\geq a(x)$ for each $x\in X$ and $d_Y(y)\geq b(y)$ for each $y\in Y$. This extends the related results due to Diwan \cite{Diw2000}, Liu and Xu \cite{LX2017} and  Ma and Yang \cite{MY2019} on simple graphs to the multigraph setting.
\end{abstract}

\textbf{Keywords:} multigraph, degree constraint, feasible partition, degeneracy

\section{Introduction}
All graphs considered in this paper are finite, undirected and may have multiple edges but no loops. Let $G$ be a graph. For a subset $X\subset V(G)$, let $G[X]$ be the subgraph of $G$ induced by $X$. For each $v\in V(G)$, denote $N_X(v)$ the set of neighbors of $v$ contained in $X$ and $d_X(v)$ the number of edges between $v$ and $X\setminus\{v\}$. When $X=V(G)$, we simplify $N_{V(G)}(v)$ and $d_{V(G)}(v)$ as $N_G(v)$ and $d_G(v)$, respectively. The \emph{multiplicity} $\mu_G(u,v)$ of two different vertices $u$ and $v$ in $G$ is the number of edges joining $u$ and $v$, and the weight $\mu_G(v)$ of a vertex $v$ is defined as $\mu_G(v)=\max\left\{\mu_G(u,v):u\in V(G)\setminus\{v\}\right\}$. Call a graph $G$ \emph{simple} if $\mu_G(v)\leq1$ for each $v\in V(G)$. By a \emph{partition} $(X,Y)$ of $V(G)$, we mean that $X$, $Y$ are two disjoint nonempty sets with $X\cup Y=V(G)$. For a set $\mathscr{H}$ of graphs, we say that a graph is $\mathscr{H}$-free if it contains no member of $\mathscr{H}$ as subgraphs. We also denote $\mathbb{N}$ the set of nonnegative integers.

Many problems raised in graph theory concern graph partitioning and one popular direction of them is to partition graphs under minimum degree constraints. For a graph $G$ and two functions $a,b:V(G)\rightarrow\mathbb{N}$, a partition $(X,Y)$ of $V(G)$ is called an \emph{$(a,b)$-feasible} partition if $d_X(x)\geq a(x)$ for each $x\in X$ and $d_Y(y)\geq b(y)$ for each $y\in Y$. In 1996, Stiebitz \cite{Sti1996} proved the following celebrated result for simple graphs, solving a conjecture due to Thomassen \cite{Tho1983}.

\begin{theorem}[Stiebitz \cite{Sti1996}]\label{Stiebitz}
Let $G$ be a simple graph and $a,b:V(G)\rightarrow\mathbb{N}$ be two functions. If $d_G(x)\ge a(x)+b(x)+1$ for each $x\in V(G)$, then there is an $(a,b)$-feasible partition of $G$.
\end{theorem}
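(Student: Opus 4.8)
The natural approach is an extremal (local-switching) argument driven by a global potential. The plan is to consider all partitions $(X,Y)$ of $V(G)$ and assign to each the quantity
\[
\Phi(X,Y) \;=\; e(G[X]) + e(G[Y]) - \sum_{x\in X} a(x) - \sum_{y\in Y} b(y),
\]
where $e(G[X])$ denotes the number of edges of $G[X]$. Since $e(G[X]) + e(G[Y]) \le e(G)$ and the two subtracted sums are nonnegative, $\Phi$ is bounded above and integer-valued, so a maximizer exists (equivalently, a local-search that repeatedly increases $\Phi$ must terminate). The whole argument rests on showing that a vertex violating its constraint can always be reassigned so as to increase $\Phi$.

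First I would verify the key local step. Suppose $x\in X$ violates its constraint, i.e.\ $d_X(x) < a(x)$, so $d_X(x) \le a(x)-1$. The degree hypothesis then gives $d_Y(x) = d_G(x) - d_X(x) \ge (a(x)+b(x)+1) - (a(x)-1) = b(x)+2$. Reassigning $x$ from $X$ to $Y$ deletes the $d_X(x)$ edges from $x$ into $X$ from $G[X]$ and adds the $d_Y(x)$ edges from $x$ into $Y$ to $G[Y]$, while replacing the subtracted term $a(x)$ by $b(x)$. Hence
\[
\Delta\Phi \;=\; \big(d_Y(x) - d_X(x)\big) + \big(a(x) - b(x)\big) \;=\; d_G(x) - 2d_X(x) + a(x) - b(x) \;\ge\; 3 \;>\; 0 ,
\]
using $d_X(x)\le a(x)-1$ and $d_G(x)\ge a(x)+b(x)+1$; the symmetric computation treats a violating $y\in Y$. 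Therefore at a maximizer of $\Phi$ no vertex violates its constraint, so one obtains a partition that is $(a,b)$-feasible in the relaxed sense that a part may be empty. The single additive slack $+1$ in the degree hypothesis is exactly what forces strict improvement, and this is the conceptual heart of the proof.

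The step I expect to be the main obstacle is guaranteeing that both $X$ and $Y$ are nonempty, as the paper's definition of a partition demands; the unrestricted maximizer of $\Phi$ can legitimately be $(V(G),\emptyset)$ (for example on a triangle with $a\equiv 0$ and $b\equiv 1$, where $\Phi(V(G),\emptyset)=3$ beats every balanced split). To handle this I would instead maximize $\Phi$ over partitions with \emph{both} parts nonempty. The exchange argument above applies verbatim whenever the part losing a vertex has size at least $2$, so the only obstruction is a violating vertex alone in its part, say $X=\{x\}$ with $d_X(x)=0<a(x)$; then $a(x)\ge 1$ and $x$ has at least $a(x)+b(x)+1\ge 2$ neighbours, all lying in $Y$, while every $y\in Y$ already satisfies $d_Y(y)\ge b(y)$. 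In this situation I would instead \emph{import} a suitable neighbour $u$ of $x$ into $X$, giving $x$ an internal edge, and compare the resulting change $\Delta\Phi = (1-d_Y(u)) + (b(u)-a(u))$ against the maximality of the chosen partition. Ensuring that some such favourable exchange is always available—possibly by importing several neighbours at once, or by passing to a separate extremal choice among the feasible partitions rather than among all partitions—is the delicate point, and it is precisely here that the argument must use more than the global potential.
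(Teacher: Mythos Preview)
The paper does not prove Theorem~\ref{Stiebitz}; it is quoted as a known result. There is thus no in-paper proof to match line by line, but the machinery the paper builds for its main theorem (Proposition~\ref{key} together with the minimal-nice-set argument opening Claim~\ref{(a-1,b-1)-meager}) specialises exactly to Stiebitz's original proof, and that is the right benchmark.

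Your potential $\Phi$ and the switching calculation $\Delta\Phi\ge 3$ for a violating vertex are correct and are indeed the engine of the argument. The gap you flag, however, is real, and neither of your proposed repairs closes it. Take $G=K_4$ with $a\equiv b\equiv 1$: the degree hypothesis holds with equality, and the maximum of $\Phi$ over nonempty partitions is $-1$, attained precisely at the $1$--$3$ splits, whereas every $(a,b)$-feasible partition is a $2$--$2$ split with $\Phi=-2$. At a maximiser the singleton violates its constraint; importing one neighbour $u$ gives $\Delta\Phi=1-d_Y(u)+b(u)-a(u)=1-2+1-1=-1<0$, and importing two neighbours lands you in the symmetric $3$--$1$ split with the same defect on the other side. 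So maximising $\Phi$ over nonempty partitions genuinely fails to locate the feasible partition, and no local import rescues it.

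The standard fix, which your last sentence gestures toward without reaching, decouples the problem. First, manufacture an $(a,b)$-feasible \emph{pair} of disjoint nonempty sets: since $V(G)\setminus\{u\}$ is already $a$-feasible for every $u$, one takes a \emph{minimal} $a$-feasible set $A\subsetneq V(G)$ and uses minimality together with the degree bound to force a $b$-feasible subset inside $V(G)\setminus A$. Second, upgrade any feasible pair to a feasible partition; this is precisely your switching argument, isolated in the paper as Proposition~\ref{key}. The idea your proposal is missing is the first step, the minimal-feasible-set construction; the potential function alone cannot supply it.
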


For special families of simple graphs, the minimum degree condition can be further sharpen (see \cite{Kan1998,LX2017,HMYZ2018,Diw2000,MY2019}). In particular, for $s,t\geq2$, Diwan \cite{Diw2000} showed that every simple graph with neither triangles nor quadrilaterals and minimum degree at least $s+t-1$ can already force a partition $(X,Y)$ as above. Later, Liu and Xu \cite{LX2017} generalized this result by considering triangle-free simple graphs in which no two quadrilaterals share edges.
\begin{theorem}[Liu and Xu \cite{LX2017}]\label{2C4}
Let $G$ be a triangle-free simple graph in which no two quadrilaterals share edges, and $a,b:V(G)\rightarrow\mathbb{N}\setminus\{0,1\}$ be two functions. If $d_G(x)\ge a(x)+b(x)-1$ for each $x\in V(G)$, then $G$ admits an $(a,b)$-feasible partition.
\end{theorem}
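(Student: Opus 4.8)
I would argue by contradiction. Suppose the statement fails and let $G$ be a counterexample with $|V(G)|$ as small as possible. Since $d_G(x)\ge a(x)+b(x)-1\ge 3$ and $G$ is simple, every component of $G$ has at least five vertices, as a triangle-free simple graph on at most four vertices has minimum degree at most two. If $G$ were disconnected, some component $C$ would satisfy all the hypotheses (the forbidden-subgraph conditions pass to subgraphs and $d_C(v)=d_G(v)$ for $v\in C$) with fewer vertices, hence would admit an $(a,b)$-feasible partition by minimality; assembling these over the components yields an $(a,b)$-feasible partition of $G$, a contradiction. So $G$ is connected.

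The main device I would use is an extremal partition. Writing $e(Z)$ for the number of edges of $G[Z]$, choose among all partitions $(X,Y)$ of $V(G)$ with $X,Y\ne\emptyset$ one maximizing
\[
\Phi(X,Y)\;=\;e(X)+e(Y)+\sum_{y\in Y}a(y)+\sum_{x\in X}b(x).
\]
The role of the degree hypothesis is precisely that it forces $\Phi$ to \emph{strictly} increase under any corrective move: if $y\in Y$ has $d_Y(y)\le b(y)-1$ and is shifted to $X$, then $d_X(y)=d_G(y)-d_Y(y)\ge a(y)+b(y)-1-d_Y(y)$, so
\[
\Delta\Phi=d_X(y)-d_Y(y)+b(y)-a(y)\;\ge\;2b(y)-1-2d_Y(y)\;\ge\;1,
\]
and symmetrically for a vertex of $X$ violating its bound. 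Hence in the maximizing partition a vertex violating its constraint can only lie in a part of size one; as $|V(G)|\ge 5$, at most one part is a singleton, so after possibly swapping $X$ and $Y$ we have $Y=\{y_0\}$ with $d_Y(y_0)=0<b(y_0)$ while $d_X(x)=d_{G-y_0}(x)\ge a(x)$ for every $x\in X=V(G)\setminus\{y_0\}$.

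The remaining task is to manufacture an $(a,b)$-feasible partition of $G$ and obtain a contradiction, and here the sparsity hypotheses and minimality come in. I would pass to $G-y_0$ with $a'=a$ and $b'$ equal to $b$ except decreased by $1$ at every neighbor of $y_0$: deleting $y_0$ drops the degree of each neighbor by exactly $1$, so $G-y_0$ satisfies the degree hypothesis for $(a',b')$, remains triangle-free with no two quadrilaterals sharing an edge, and is smaller. When $b'\ge 2$ everywhere (no neighbor of $y_0$ has $b$-value $2$), minimality yields an $(a',b')$-feasible partition $(P,Q)$ of $G-y_0$, and placing $y_0$ into $Q$ satisfies every vertex except possibly $y_0$. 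To fix $y_0$, note that triangle-freeness makes $N_G(y_0)$ independent, so any quadrilateral through $y_0$ has the form $y_0u_1wu_2y_0$ with $u_1,u_2\in N_G(y_0)$, and the quadrilateral condition prevents a vertex outside such a quadrilateral from having two neighbors on it; in particular, when the four $b$-values on such a quadrilateral are all $2$, taking $B$ to be its vertex set and $A$ the complement already works, since every vertex of $A$ loses at most one incident edge and $b\ge 2$ provides the slack. In the general case one spends the accumulated surplus $\sum_{x\in X}\bigl(d_{G-y_0}(x)-a(x)\bigr)$ — which is large because non-neighbors of $y_0$ have surplus at least $b(x)-1$ — to move a bounded set of neighbors of $y_0$ (and, if needed, some of their further neighbors, whose overlap is controlled by the quadrilateral condition) onto $y_0$'s side, lifting $d(y_0)$ to $b(y_0)$ without disturbing anyone else; the extremality of $\Phi$ is what excludes the obstructions to doing so.

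The step I expect to be the main obstacle is exactly this last one: converting the $(a',b')$-feasible partition of $G-y_0$ into an $(a,b)$-feasible partition of $G$ by a bounded local modification near $y_0$, with the slack accounting carried out precisely, and treating the degenerate cases $a(u)=2$ or $b(u)=2$ for $u\in N_G(y_0)$, where the clean reduction to $G-y_0$ breaks down and a direct local analysis around $y_0$ and that neighbor is needed. This is the point at which ``triangle-free'' (independence of $N_G(y_0)$) and ``no two quadrilaterals share an edge'' (at most one neighbor of any outside vertex on a quadrilateral) have to be combined, and making the constants fit is what pins the threshold at $a(v)+b(v)-1$.
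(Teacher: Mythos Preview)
Your weight $\Phi$ is exactly the paper's $\omega$, but maximizing it over \emph{all} partitions yields only a triviality: since $d_{V\setminus\{v\}}(x)\ge d_G(x)-1\ge a(x)+b(x)-2\ge a(x)$ for every $x\ne v$, the set $V\setminus\{v\}$ is $a$-feasible for \emph{every} vertex $v$, so your conclusion ``$Y=\{y_0\}$ with $X=V\setminus\{y_0\}$ $a$-feasible'' is available without any extremal reasoning at all. The entire content of the theorem therefore lies in what you call ``the remaining task,'' and the sketch there is not a proof: the induction on $G-y_0$ collapses whenever a neighbour of $y_0$ has $b$-value $2$ (as you acknowledge), and the ``bounded local modification'' and ``accumulated surplus'' heuristics are never turned into an argument. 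The single quadrilateral case you verify is correct but gives no indication of how to handle a general $y_0$; invoking ``the extremality of $\Phi$'' at the end does not help, because you have already shown that the extremal partition is the trivial one.

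The paper (which proves the stronger Theorem~\ref{multigraph} and recovers this statement by taking $\mu\equiv 1$) deploys the same weight $\omega$ in a very different way. First, Proposition~\ref{key} reduces the problem to finding an $(a,b)$-feasible \emph{pair} of disjoint subsets rather than a partition. Then $\omega$ is maximized not over all partitions but only over $(a{-}1,b{-}1)$-\emph{meager} partitions---partitions in which neither side contains an $a$-nice (resp.\ $b$-nice) subset; Claim~\ref{(a-1,b-1)-meager} shows this family is nonempty. Restricting to meager partitions is what makes the increment inequalities bite: one can swap low-degree vertices between the two sides while remaining in the family, and each swap either strictly increases $\omega$ or forces a tight numerical equality (Claims~\ref{containment}--\ref{exchange-x1v}). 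A long structural analysis then uses the forbidden configurations $K_4^-$, $C_5^+$, $K_{2,3}$, $L_3$ to eliminate the resulting local pictures. Your global maximum never produces those equalities, which is why the endgame has nowhere to go.
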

Recently, Ma and Yang \cite{MY2019} obtained the following strengthening of Diwan's result.
\begin{theorem}[Ma and Yang \cite{MY2019}]\label{C4}
Let $G$ be a quadrilateral-free simple graph and $a,b:V(G)\rightarrow\mathbb{N}\setminus\{0,1\}$ be two functions. If $d_G(x)\ge a(x)+b(x)-1$ for each $x\in V(G)$, then $G$ admits an $(a,b)$-feasible partition.
\end{theorem}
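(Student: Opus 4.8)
The plan is to argue by contradiction through an extremal choice of partition. Suppose $G$ admits no $(a,b)$-feasible partition, and among all partitions $(X,Y)$ of $V(G)$ with both parts nonempty choose one minimizing the total deficiency
\[
\Phi(X,Y)=\sum_{x\in X}\bigl(a(x)-d_X(x)\bigr)^{+}+\sum_{y\in Y}\bigl(b(y)-d_Y(y)\bigr)^{+},
\]
where $t^{+}=\max\{t,0\}$; among these I would break ties first by the number of \emph{bad} vertices (those $x\in X$ with $d_X(x)<a(x)$, or $y\in Y$ with $d_Y(y)<b(y)$), and then by the number of edges inside $X$ and $Y$. Since no feasible partition exists we have $\Phi>0$, so there is a bad vertex; by the symmetry of the two sides say $x_0\in X$ with $d_X(x_0)\le a(x_0)-1$. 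The degree hypothesis then forces $d_Y(x_0)=d_G(x_0)-d_X(x_0)\ge (a(x_0)+b(x_0)-1)-(a(x_0)-1)=b(x_0)$, so $x_0$ would be satisfied if it were moved into $Y$. The central obstacle, and the reason the threshold $a+b-1$ sits two below Stiebitz's $a+b+1$, is that transferring $x_0$ to $Y$ lowers the $X$-degree of each of its $X$-neighbors and can create new deficiencies, so a single transfer need not decrease $\Phi$.

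I would next convert the non-improvement of such transfers into rigid local structure. Calling $x'\in N_X(x_0)$ \emph{critical} when $d_X(x')\le a(x')$, a short bookkeeping of $\Phi$ under the move of $x_0$ shows that $\Phi$ changes by at most $k-(a(x_0)-d_X(x_0))$, where $k$ is the number of critical $X$-neighbors of $x_0$: the term of $x_0$ drops by $a(x_0)-d_X(x_0)$, each critical neighbor gains exactly $1$, non-critical $X$-neighbors are unaffected, and the $Y$-neighbors only improve. Minimality of $\Phi$ then yields $k\ge a(x_0)-d_X(x_0)\ge 1$, so every bad vertex has a critical neighbor on its own side; here $a,b\ge 2$ is used to guarantee that a bad vertex is genuinely non-isolated within its part. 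Iterating the transfer along critical neighbors produces a cascade of moves, and the secondary tie-breaks (fewest bad vertices, then fewest internal edges) guarantee that no such cascade strictly improves the partition. This pins the bad and critical vertices into a tightly constrained core $C$ in which nearly every incident edge is forced by the thresholds $a$ and $b$: each critical $v\in C\cap X$ has $d_X(v)\le a(v)$ and hence at least $d_G(v)-a(v)\ge b(v)-1$ neighbors in $Y$.

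The crux, and the step I expect to demand the most care, is to extract a numerical contradiction from $C$ using quadrilateral-freeness. The key structural fact is that in a $C_4$-free simple graph any two vertices have at most one common neighbor, so the crossing neighborhoods of the critical vertices of $C$ are pairwise almost disjoint; this both forbids the cascades from folding back on themselves and limits how edges incident to $C$ can accumulate. I would make this quantitative by a double count: summing $d_G(v)\ge a(v)+b(v)-1$ over $v\in C$, and bounding the same sum from above by the internal edges of $C$ together with the crossing edges, whose multiplicity is capped by the at-most-one-common-neighbor property. The $C_4$-free overlap bound should render the upper estimate strictly smaller than the lower one, contradicting either the degree hypothesis or the minimality of $(X,Y)$, and it is exactly here that the saving from $a+b+1$ down to $a+b-1$ is realized. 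Finally I would dispose of the boundary case in which a transfer threatens to empty a part: if $|X|=1$ then all of $x_0$'s at least $a(x_0)+b(x_0)-1\ge 3$ edges lie in $Y$, and moving a suitably chosen vertex of $Y$ into $X$ instead contradicts optimality, while a preliminary reduction to the connected case (recombining the nonempty feasible partitions of the components) lets the argument run on each component.
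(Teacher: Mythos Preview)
Your extremal setup (minimize total deficiency $\Phi$, with tie-breaks) and the single-vertex bookkeeping you do are fine, but the argument stalls at the step you yourself flag as ``the crux.'' You propose to derive a contradiction from a double count over a core $C$, using that in a $C_4$-free graph two vertices share at most one neighbor; however, you never specify what $C$ is beyond ``bad and critical vertices,'' and summing $d_G(v)\ge a(v)+b(v)-1$ over $v\in C$ against $2e(G[C])+e(C,V\setminus C)$ does not obviously interact with the common-neighbor bound at all. The one-common-neighbor fact controls overlaps of \emph{neighborhoods}, not edge counts on a set, so it is unclear what quantity you are bounding from above or why it must undercut the lower bound. Your language (``should render the upper estimate strictly smaller'') signals that this step is a hope rather than a computation; as written it is a genuine gap, and I do not see a way to complete it along these lines without importing substantially more structure on $C$.

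For comparison, the paper obtains Theorem~\ref{C4} as the simple, $C_4$-free case of Theorem~\ref{multigraph}, and its proof proceeds quite differently. It does not minimize a deficiency; instead it first reduces (via Proposition~\ref{key}) to showing that no $(a,b)$-feasible \emph{pair} exists, then works with the family $\mathscr{P}$ of $(a-1,b-1)$-meager partitions of \emph{maximum} weight $\omega(A,B)=|E(G[A])|+|E(G[B])|+\sum_{A}b+\sum_{B}a$. From maximality it extracts precise structure on the sets $A^-$, $B^-$, $D_A$, $D_B$, $A^=$, $B^=$, and repeatedly swaps single vertices between the parts (Claims~\ref{containment}--\ref{exchange}) to force small forbidden configurations. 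Crucially, $C_4$-freeness is used not as a counting constraint but as the simultaneous exclusion of $K_4^-$, $C_5^+$, $K_{2,3}$ and $L_3$, each of which is invoked at a specific swap to produce a contradiction. Your cascade-plus-double-count outline does not capture this mechanism; if you want to push your approach through, you would need to replace the vague double count with a concrete swap argument that actually exhibits a forbidden subgraph.
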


In 2017, Ban \cite{Ban2017} proved a conclusion related to Theorem \ref{Stiebitz} on weighted simple graphs. Later, Schweser and Stiebitz \cite{SS2019} further studied this problem on graphs, and generalized the results of Stiebitz \cite{Sti1996} and Liu and Xu \cite{LX2017} from simple graphs to graphs. Very recently, confirming two conjectures of Schweser and Stiebitz, Liu and Xu \cite{LX2019} obtained a graph version of Theorem \ref{2C4}.
\begin{theorem}[Liu and Xu \cite{LX2019}]\label{multi-2C4}
Let $G$ be a triangle-free graph in which no two quadrilaterals share edges, and $a,b:V(G)\rightarrow\mathbb{N}\setminus\{0,1\}$ be two functions. If $d_G(x)\ge a(x)+b(x)+2\mu_G(x)-3$ for each $x\in V(G)$, then $G$ admits an $(a,b)$-feasible partition.
\end{theorem}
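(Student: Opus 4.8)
The plan is to argue by contradiction, combining a minimal counterexample with an extremal choice of partition and a local switching analysis, following the strategy used by Diwan, Liu and Xu, and Ma and Yang for simple graphs, but carrying the multiplicities $\mu_G(\cdot)$ through every estimate. So suppose the theorem fails and, among all counterexamples $(G,a,b)$, pick one with $|V(G)|+|E(G)|$ minimum and, subject to that, $\sum_{v\in V(G)}(a(v)+b(v))$ maximum. Routine reductions should give: the degree hypothesis forces $\delta(G)\ge 3$, so there are no isolated vertices and $\mu_G(v)\ge1$ for all $v$; $G$ is connected (otherwise split off a component, which is a smaller non-counterexample, and throw the remaining components entirely into one side of its feasible partition); and $d_G(v)=a(v)+b(v)+2\mu_G(v)-3$ for every $v$ (any slack at $v$ would let us raise $a(v)$ by $1$, keeping a counterexample but increasing $\sum(a+b)$). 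Since $(V(G),\emptyset)$ already satisfies the $X$-side constraint — as $d_G(x)=a(x)+b(x)+2\mu_G(x)-3\ge a(x)$ because $b(x)\ge2$ and $\mu_G(x)\ge1$ — it suffices to produce, among partitions $(X,Y)$ with both parts nonempty, one with no \emph{bad} vertex (a vertex $x\in X$ with $d_X(x)<a(x)$, or $y\in Y$ with $d_Y(y)<b(y)$). So I would fix such a partition $(X,Y)$ minimizing the number $m\ge1$ of bad vertices, and among those minimizing a secondary potential $\Phi(X,Y)$ — for instance the total of the bad vertices' degrees on their own sides, or the number of edges inside $X$ together with the number inside $Y$ — with $\Phi$ chosen so that the switching step can be closed; the cases $|X|=1$ or $|Y|=1$ would be disposed of by hand.

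Next I would analyse a bad vertex $v$, say $v\in X$ with $d_X(v)<a(v)$; then $d_Y(v)=d_G(v)-d_X(v)\ge d_G(v)-a(v)+1=b(v)+2\mu_G(v)-2\ge b(v)$, so $v$ is happy once moved into $Y$, and by minimality this move must create a new bad vertex, necessarily some $u\in N_X(v)$ with $a(u)\le d_X(u)\le a(u)+\mu_G(u,v)-1$; call such a $u$ a \emph{critical neighbour} of $v$. Triangle-freeness makes $N(v)$ independent and, crucially, forces every edge $uv$ with $\mu_G(u,v)\ge2$ to have $N(u)\cap N(v)=\emptyset$, so a heavily multiple critical edge is isolated from the rest of $N(v)$; the edge-disjoint-quadrilaterals hypothesis then severely restricts how two critical neighbours of $v$ can attach to the other neighbours of $v$, since any further common neighbour tends to yield two quadrilaterals sharing an edge once a multiplicity exceeds $1$. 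The target structural statement is that the critical neighbourhood of $v$ is essentially tree-like, with every relevant multiplicity dominated by $\mu_G(v)$.

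Then comes the switching step, which is the heart of the matter. Because $N_X(v)$ and $N_X(u)$ are disjoint by triangle-freeness, moving $v$ and a critical neighbour $u$ jointly into $Y$ — or iterating this along the critical neighbours, a process that must terminate since each move strictly shrinks $X$ — behaves like a succession of essentially independent single moves, and I would show it either strictly improves the potential $(m,\Phi)$, contradicting minimality, or forces a configuration violating the sparseness hypotheses. The arithmetic is driven by the equality $d_G(v)=a(v)+b(v)+2\mu_G(v)-3$: Stiebitz's general theorem supplies only the weaker constant $+1$; triangle-freeness buys back a constant, because an independent neighbourhood lets one reuse structure on the $X$-side and on the $Y$-side; and each unit of multiplicity at $v$ must be paid for twice, once on the $X$-side (a critical neighbour absorbs up to $\mu_G(u,v)\le\mu_G(v)$ edges when $v$ moves) and once on the $Y$-side — which is precisely the passage from $+1$, really $-1$ after the triangle-free saving, to $2\mu_G(v)-3=-1+2(\mu_G(v)-1)$.

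The hard part, I expect, is exactly this last step once multiplicities and boundary values intervene. A single pair of vertices with large multiplicity is cheap for building quadrilaterals, so the interplay of parallel edges with the forbidden-quadrilateral condition needs delicate bookkeeping; moving $v$ may create several new bad vertices at once, so the ``chain'' of critical-neighbour moves is really a proliferating process that the secondary potential $\Phi$ must be engineered to tame; and the whole argument has to stay alive at vertices where $a$ or $b$ equals the floor value $2$, where the natural manoeuvre of decreasing $a(u)$ by $\mu_G(u,v)$ is unavailable. Finding one choice of $\Phi$ and one switching scheme that simultaneously handles all combinations of multiplicities and of small values of $a,b$, without ever leaving the class of multigraphs satisfying the hypotheses, is in my view the crux of the proof.
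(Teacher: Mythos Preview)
Your proposal is a strategy outline, not a proof: you explicitly leave the potential $\Phi$ unspecified (``with $\Phi$ chosen so that the switching step can be closed'') and concede that ``finding one choice of $\Phi$ and one switching scheme \dots\ is in my view the crux of the proof.'' That crux is the entire content of the theorem. The bad-vertex-minimizing framework you describe is the natural first attempt, but in the multigraph setting a single move of a bad vertex $v$ can create up to $|N_X(v)|$ new bad vertices simultaneously (one for each critical neighbour $u$ with $d_X(u)\le a(u)+\mu_G(u,v)-1$), and your proposed iteration ``along the critical neighbours'' branches rather than terminates. Nothing you have written controls that branching, and the vague list of candidate $\Phi$'s (own-side degree sums, internal edge counts) does not obviously decrease under the moves you describe once multiplicities exceed $1$.

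The paper (which deduces this theorem from its more general Theorem~\ref{multigraph}) does not work with bad-vertex-minimizing partitions at all. It first reduces, via the Schweser--Stiebitz Proposition~\ref{key}, to showing that $G$ contains an $(a,b)$-feasible \emph{pair} (disjoint, not necessarily covering). It then establishes the existence of an $(a-1,b-1)$-\emph{meager} partition and works in the family $\mathscr{P}$ of such partitions maximizing the explicit weight $\omega(A,B)=|E(G[A])|+|E(G[B])|+\sum_{A}b+\sum_{B}a$. The sets $A^-,B^-,D_A,D_B,A^=,B^=$ give a much finer stratification than your bad/critical dichotomy, and the exchange equalities \eqref{weight1}--\eqref{weight3} make each swap's effect on $\omega$ exact rather than merely bounded. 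The ensuing case analysis (Claims~\ref{containment}--\ref{NDz-v2}) is long and uses the forbidden subgraphs $K_4^-,C_5^+,K_{2,3},L_3$ in very specific places; in particular, the edge-disjoint-quadrilaterals hypothesis is used exactly once (Claim~\ref{min}) to force $\min\{|A^-|,|B^-|\}=1$. Your sketch neither identifies this reduction nor isolates where each forbidden configuration is needed, so as it stands it is not close to a proof.
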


For related problems on graph partitioning under degree constraints or other variances, we refer readers to \cite{BTV2007,FLW2019,Lov1966,She1988,SX2020}. In this paper, we consider the partitions of graphs and give a unified generalization of Theorems \ref{2C4}, \ref{C4} and \ref{multi-2C4} as well as the result of Diwan \cite{Diw2000}. Precisely, we establish the following theorem.

\begin{theorem}\label{multigraph}
Let $G$ be a graph in which no quadrilaterals share edges with triangles and other quadrilaterals, and let $a,b:V(G)\rightarrow\mathbb{N}\setminus\{0,1\}$ be two functions. If $d_G(x)\ge a(x)+b(x)+2\mu_G(x)-3$ for each $x\in V(G)$, then $G$ admits an $(a,b)$-feasible partition.
\end{theorem}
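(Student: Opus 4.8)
The plan is to argue by contradiction, taking $G$ to be a counterexample minimizing $|V(G)|+|E(G)|$, and to extract structural information from the extremality until a contradiction emerges. For a partition $(X,Y)$ of $V(G)$, define the deficiency of $x\in X$ to be $\max\{0,a(x)-d_X(x)\}$ and symmetrically for $Y$; call $(X,Y)$ \emph{optimal} if it minimizes the total deficiency $\sum_x(a(x)-d_X(x))^+ + \sum_y(b(y)-d_Y(y))^+$, and among those minimizes $\sum$ (something secondary, e.g. the number of edges inside the parts, or $\sum d_X(x)+d_Y(y)$) --- the precise secondary objective is one of the things to be pinned down. Since $G$ is a counterexample, every optimal partition has positive total deficiency, so there is a vertex, say $v\in X$, with $d_X(v)<a(v)$; call such a vertex \emph{bad}. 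The local exchange argument is the engine: moving $v$ from $X$ to $Y$ changes deficiencies only at $v$ and its neighbors, and optimality forces inequalities relating $d_X(v)$, $d_Y(v)$, and the number of bad neighbors of $v$ in $Y$. Combined with $d_G(v)=d_X(v)+d_Y(v)\ge a(v)+b(v)+2\mu_G(v)-3$ and $a(v),b(v)\ge 2$, this should show that a bad vertex $v\in X$ has many (roughly $d_Y(v)\ge b(v)-1+\text{slack}$) neighbors in $Y$, enough of which are themselves bad, so that bad vertices propagate and form a nontrivial substructure; in particular one shows $d_X(v)\ge 1$ (so $v$ has at least one neighbor in its own part) whenever $v$ is bad.

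The heart of the argument is to analyze the subgraph $H$ induced (roughly) by the bad vertices together with the edges witnessing the tightness of the exchange inequalities, and to show that $H$ must contain a configuration forbidden by the hypothesis --- namely a quadrilateral sharing an edge with a triangle or another quadrilateral --- or else that $H$ is sparse enough to be "fixed" by a global re-partition, contradicting optimality. Here the weight parameter $\mu_G(v)$ enters: the $2\mu_G(v)$ term in the degree bound compensates for the fact that up to $\mu_G(v)$ parallel edges can join $v$ to a single neighbor, so that when we count \emph{distinct} neighbors rather than edges we still retain enough of them; this is exactly the mechanism by which the simple-graph arguments of Diwan, Liu--Xu and Ma--Yang get upgraded to multigraphs, and it parallels the bookkeeping in Theorem~\ref{multi-2C4}. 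I expect the proof to split into cases according to how many "short cycles" (triangles/quadrilaterals) pass through the bad vertices, mirroring the three special cases being unified: if bad vertices lie on no short cycles we recover a Diwan-type count; if they lie on quadrilaterals the edge-disjointness hypothesis limits how the quadrilaterals interact, and one plays the triangle-free/$C_4$-structure off against the degeneracy of $H$.

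For the re-partitioning step I would use the minimality of $G$ in the standard way: delete a well-chosen vertex or edge (for instance a bad vertex $v$, or an edge incident to a bad vertex, after adjusting $a,b$ down by $1$ at the affected vertices so the degree hypothesis is preserved for the smaller instance), apply the inductive hypothesis to get an $(a',b')$-feasible partition of the smaller graph, and then reinsert $v$ (or the edge) on the side that keeps both constraints satisfied --- the $2\mu_G(v)-3$ slack and $a,b\ge 2$ are tuned precisely so that at least one of the two choices works, unless $v$'s neighborhood realizes a forbidden short-cycle configuration. The main obstacle, and where essentially all the work lies, is the combinatorial geometry of that last "unless": controlling exactly how triangles and quadrilaterals through the relevant vertices can overlap under the hypothesis "no quadrilateral shares an edge with a triangle or another quadrilateral", and showing that in every admissible configuration there remains a safe side to place $v$. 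One should be careful that this is strictly more delicate than in Theorems~\ref{2C4} and \ref{C4} separately, since we must simultaneously accommodate triangles (as in the $C_4$-free case) and edge-disjoint-ish quadrilaterals (as in the two-$C_4$ case) coexisting in $G$, which is why the hypothesis is phrased the way it is; verifying that no new bad overlap is created by this coexistence is the crux.
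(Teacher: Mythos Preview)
Your proposal is a high-level plan rather than a proof, and at the one place where it commits to a concrete mechanism it runs into a genuine obstruction. The ``delete a bad vertex or edge, lower $a,b$ by $1$ at the affected vertices, apply the inductive hypothesis'' step does not preserve the hypothesis $a,b:V(G)\to\mathbb{N}\setminus\{0,1\}$: if $a(u)=2$ at a neighbor $u$ of the deleted vertex, the reduced function takes the value $1$, and the theorem is \emph{false} in that regime (cycles with $a\equiv 1$, $b\equiv 2$ are counterexamples, as the paper notes). So minimality of $G$ in the form you describe cannot be invoked. Relatedly, your primary objective (total deficiency) and your unspecified secondary objective are not the ones that make the exchange calculus close up; the paper works with the weight $\omega(A,B)=|E(G[A])|+|E(G[B])|+\sum_{A}b+\sum_{B}a$, maximized over $(a-1,b-1)$-\emph{meager} partitions, and it is exactly this combination that makes the moves in \eqref{weight1}--\eqref{weight3} yield clean equalities.

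More structurally, the paper does not argue by induction on $G$ at all. It first invokes the Schweser--Stiebitz reduction (Proposition~\ref{key}): an $(a,b)$-feasible \emph{pair} already gives an $(a,b)$-feasible \emph{partition}, so one may assume no such pair exists. From that, one constructs an $(a-1,b-1)$-meager partition (Claim~\ref{(a-1,b-1)-meager}), maximizes $\omega$ over those, and then runs a long sequence of exchange claims (Claims~\ref{containment}--\ref{configuration}) that pin the structure down to forbidden configurations. The hypothesis is used in the specific form ``$G$ is $\{K_4^-,C_5^+,K_{2,3},L_3\}$-free'', and each of these four subgraphs is actually needed at an identifiable step; in particular $L_3$-freeness is used exactly once (Claim~\ref{min}) and is provably necessary. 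Your outline never isolates these forbidden subgraphs, nor the ``nice/meager'' dichotomy that drives the argument, and you explicitly defer ``essentially all the work'' to an unexecuted case analysis. What is missing is not polish but the central idea: replace induction-on-$G$ by the feasible-pair reduction plus weight-maximal meager partitions, and then carry out the (substantial) structural analysis against the four concrete forbidden subgraphs.
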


Note that this is tight for cycles in the following two perspectives. Firstly, the ranges of the functions $a,b$ cannot be relaxed to the set of integers at least one by choosing the constant functions $a=b-1=1$. Secondly, one also cannot lower the degree condition further by choosing the constant functions $a=b=2$. We also mention that $G$ is actually $\{K_4^-,C_5^+,K_{2,3},L_3\}$-free in Theorem \ref{multigraph}, where $K_4^-$ is the graph obtained from $K_4$ by removing one edge, $C_5^+$ is the graph obtained from $C_5$ by adding one edge between two nonadjacent vertices, and $L_3$ is the graph consisting of two quadrilaterals sharing exactly one common edge. Additionally, we use the condition that $G$ is $L_3$-free exactly once (see Claim \ref{min}) in our proof; however, this condition is necessary as shown by the graph constructed in \cite{Zu2020}.


\section{Notations and Propositions}\label{Notation}
Let $G$ be a graph and $f:V(G)\rightarrow\mathbb{N}\setminus\{0,1\}$ be a function. For a subset $X\subseteq V(G)$, we say that (i) $X$ is \emph{$f$-nice} if $d_X(x)\geq f(x)+\mu_G(x)-1$ for each $x\in X$, (ii) $X$ is \emph{$f$-feasible} if $d_X(x)\geq f(x)$ for each $x\in X$, (iii) $X$ is \emph{$f$-meager} if for each nonempty subset $X'\subseteq X$ there exists a vertex $x\in X'$ such that $d_{X'}(x)\leq f(x)+\mu_G(x)-1$, and (iv) $X$ is \emph{$f$-degenerate} if for each nonempty subset $X'\subseteq X$ there exists a vertex $x\in X'$ such that $d_{X'}(x)\leq f(x)$. We have the following propositions immediately from the definitions.

\begin{proposition}\label{pro2}
If $\mu_G(x)\geq1$ for each $x\in V(G)$, then each $f$-nice subset is also $f$-feasible and each $f$-degenerate subset is also $f$-meager.
\end{proposition}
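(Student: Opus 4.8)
The plan is to deduce both implications directly from the stated definitions, the only input being the elementary inequality $\mu_G(x)-1\ge 0$, which holds for every $x\in V(G)$ because the hypothesis guarantees $\mu_G(x)\ge 1$. No induction, extremal argument, or structural hypothesis on $G$ is needed here; the proposition is a bookkeeping observation relating the ``$+\mu_G(x)-1$'' thresholds to the plain thresholds.

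First I would treat the implication ``$f$-nice $\Rightarrow$ $f$-feasible''. Suppose $X\subseteq V(G)$ is $f$-nice, so $d_X(x)\ge f(x)+\mu_G(x)-1$ for each $x\in X$. Since $\mu_G(x)-1\ge 0$, chaining this with $f(x)+\mu_G(x)-1\ge f(x)$ yields $d_X(x)\ge f(x)$ for every $x\in X$, which is exactly the definition of $f$-feasibility. Thus $X$ is $f$-feasible.

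Next I would treat the implication ``$f$-degenerate $\Rightarrow$ $f$-meager''. Suppose $X$ is $f$-degenerate, and fix an arbitrary nonempty subset $X'\subseteq X$. By definition there is a vertex $x\in X'$ with $d_{X'}(x)\le f(x)$. Using $\mu_G(x)-1\ge 0$ once more, we have $f(x)\le f(x)+\mu_G(x)-1$, so the same vertex $x$ satisfies $d_{X'}(x)\le f(x)+\mu_G(x)-1$. Since $X'$ was an arbitrary nonempty subset of $X$, this is precisely the $f$-meager condition, so $X$ is $f$-meager.

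There is essentially no obstacle to overcome: both parts reduce to the single observation that raising a threshold from $f(x)$ to $f(x)+\mu_G(x)-1$ can only relax a lower bound and tighten an upper bound, and the direction of each defining inequality ($\ge$ for feasible/nice, $\le$ for degenerate/meager) makes the weaker notion follow from the stronger one. The only point worth stating explicitly is that in the ``degenerate $\Rightarrow$ meager'' case the \emph{same} witness vertex $x$ works for each $X'$, so no new choice of vertex is required when passing from one condition to the other.
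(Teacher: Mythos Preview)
Your proof is correct and follows exactly the approach the paper intends: the paper states that this proposition follows ``immediately from the definitions'' without further argument, and your unpacking of the two inequalities using $\mu_G(x)\ge 1$ is precisely that direct verification.
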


\begin{proposition}\label{pro1}
A subset of $V(G)$ does not contain any $f$-feasible subset if and only if it is $(f-1)$-degenerate.
\end{proposition}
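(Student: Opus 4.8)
The plan is to unwind the two definitions and observe that the equivalence is essentially tautological, the only substantive point being the integrality of vertex degrees. The pivotal observation is the following pointwise statement: for any nonempty set $X'\subseteq V(G)$, the set $X'$ \emph{fails} to be $f$-feasible if and only if there exists a vertex $x\in X'$ with $d_{X'}(x)\leq f(x)-1$. Indeed, by definition $X'$ is $f$-feasible precisely when $d_{X'}(x)\geq f(x)$ holds for every $x\in X'$; negating this quantified statement yields some $x\in X'$ with $d_{X'}(x)<f(x)$, and since $d_{X'}(x)$ and $f(x)$ are both integers this strict inequality is equivalent to $d_{X'}(x)\leq f(x)-1$, i.e. $d_{X'}(x)\le(f-1)(x)$. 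This is exactly the witness demanded by the definition of $(f-1)$-degeneracy applied to the subset $X'$.

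Granting this observation, both implications follow by quantifying over subsets. For the forward direction I would argue by contraposition: if $S$ contains no $f$-feasible subset, then every nonempty $X'\subseteq S$ is itself not $f$-feasible, so the observation supplies a vertex $x\in X'$ with $d_{X'}(x)\le f(x)-1$; as $X'$ was arbitrary, this is precisely the assertion that $S$ is $(f-1)$-degenerate. Conversely, suppose $S$ is $(f-1)$-degenerate and let $T\subseteq S$ be any nonempty candidate for an $f$-feasible subset. Applying the degeneracy condition to $X'=T$ produces a vertex $x\in T$ with $d_T(x)\le f(x)-1<f(x)$, so $T$ violates $f$-feasibility. Since $T$ was arbitrary, $S$ contains no $f$-feasible subset.

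There is essentially no obstacle here beyond careful bookkeeping of quantifiers: both sides are statements ranging over all nonempty subsets of $S$, and once the pointwise equivalence above is in place the two quantified statements coincide term by term. The one place where the argument uses more than pure logic is the passage between the strict inequality $d_{X'}(x)<f(x)$ and the non-strict bound $d_{X'}(x)\le f(x)-1$, which relies on both quantities being integers; I would flag this explicitly so that the reader sees why the ``minus one'' in $(f-1)$-degeneracy is exactly the right shift.
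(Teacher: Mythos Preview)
Your argument is correct and matches the paper's treatment: the paper states this proposition as following ``immediately from the definitions'' and gives no explicit proof, and what you have written is precisely the routine unwinding of those definitions, with the integrality remark being the only nontrivial step. There is nothing to add.
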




For a graph $G$ and two functions $a,b:V(G)\rightarrow\mathbb{N}$, a pair $(X,Y)$ of disjoint subsets of $V(G)$ is called an \emph{$(a,b)$-feasible} pair if $X$ is $a$-feasible and $Y$ is $b$-feasible; if in addition $(X,Y)$ is a partition of $V(G)$, then we call it an $(a,b)$-feasible partition. Similarly, a partition $(X,Y)$ of $V(G)$ is called an \emph{$(a,b)$-meager} partition if $X$ is $a$-meager and $Y$ is $b$-meager. The following proposition due to Schweser and Stiebitz \cite{SS2019} plays a vital role in our proof of Theorem \ref{multigraph}.

\begin{proposition}[Schweser and Stiebitz \cite{SS2019}]\label{key}
Let $G$ be a graph without isolated vertices, and let $a,b:V(G)\rightarrow\mathbb{N}$ be two functions such that $d_G(x)\ge a(x)+b(x)+2\mu_G(x)-3$ for each $x\in V(G)$. If $G$ has an $(a,b)$-feasible pair, then it admits an $(a,b)$-feasible partition.
\end{proposition}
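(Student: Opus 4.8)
The plan is to prove the statement by an extremal argument: among all $(a,b)$-feasible pairs with both parts nonempty (one exists by hypothesis) I would choose $(X,Y)$ maximizing $|X|+|Y|$, and aim to show it is already a partition. Write $Z=V(G)\setminus(X\cup Y)$ and suppose for contradiction that $Z\ne\emptyset$. A first, routine observation is that no single vertex of $Z$ can be absorbed: if some $v\in Z$ had $d_X(v)\ge a(v)$, then $(X\cup\{v\},Y)$ would be a strictly larger feasible pair, and symmetrically for $Y$. Hence $d_X(v)\le a(v)-1$ and $d_Y(v)\le b(v)-1$ for every $v\in Z$. Introducing the restricted functions $a'=a-d_X$ and $b'=b-d_Y$ on $Z$, this gives $a'(v),b'(v)\ge 1$, and subtracting from the degree hypothesis (and using $\mu_G(v)\ge 1$, which holds since $G$ has no isolated vertices) yields
\[
d_Z(v)\ \ge\ a'(v)+b'(v)+2\mu_G(v)-3\ \ge\ a'(v)+b'(v)-1\qquad(v\in Z).
\]

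The crux is to convert the failure of single-vertex absorption into a global structural property of $Z$, and this is the step I expect to be the main obstacle. The key realisation is that one may be forced to move an entire feasible chunk of $Z$ at once rather than a single vertex (already for a single edge with $a'=b'=1$ no vertex can be moved alone, yet the two endpoints can be moved together). Precisely, for a nonempty $W\subseteq Z$ the set $X\cup W$ is $a$-feasible if and only if $W$ is $a'$-feasible in $G[Z]$, since each $w\in W$ then has $d_{X\cup W}(w)=d_X(w)+d_W(w)\ge a(w)$; the analogous statement holds for $Y$ and $b'$. Maximality of $(X,Y)$ therefore rules out any nonempty $a'$-feasible subset and any nonempty $b'$-feasible subset of $Z$. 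By Proposition \ref{pro1} (a purely combinatorial equivalence, valid for these integer-valued functions), this means exactly that $G[Z]$ is simultaneously $(a'-1)$-degenerate and $(b'-1)$-degenerate.

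It then remains to extract a contradiction by double counting the edges of $G[Z]$. Peeling $Z$ along an $(a'-1)$-degeneracy ordering bounds each vertex's back-degree by $a'(v)-1$, so $e(G[Z])\le\sum_{v\in Z}(a'(v)-1)$; the $(b'-1)$-degeneracy gives $e(G[Z])\le\sum_{v\in Z}(b'(v)-1)$ in the same way. Adding these, $2e(G[Z])\le\sum_{v\in Z}(a'(v)+b'(v))-2|Z|$. On the other hand, summing the displayed degree bound over $Z$ gives $2e(G[Z])=\sum_{v\in Z}d_Z(v)\ge\sum_{v\in Z}(a'(v)+b'(v))-|Z|$. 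Comparing the two estimates forces $|Z|\le 0$, contradicting $Z\ne\emptyset$. Consequently the extremal pair satisfies $X\cup Y=V(G)$ and is the desired $(a,b)$-feasible partition. Beyond this double count, the only delicate points are setting up the restricted functions $a',b'$ correctly and recognising—via Proposition \ref{pro1}—that the right obstruction to extendability is the joint degeneracy of $Z$ rather than any vertex-local condition.
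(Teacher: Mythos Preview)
The paper does not supply its own proof of this proposition; it is quoted from Schweser and Stiebitz \cite{SS2019} and used as a black box. So there is nothing in the present paper to compare your argument against.

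That said, your argument is correct and is the natural approach. Taking an $(a,b)$-feasible pair $(X,Y)$ with $|X|+|Y|$ maximum, the reductions $a'=a-d_X$ and $b'=b-d_Y$ on $Z=V(G)\setminus(X\cup Y)$ are set up properly; maximality forbids any nonempty $a'$- or $b'$-feasible subset of $Z$, whence $Z$ is both $(a'-1)$- and $(b'-1)$-degenerate by Proposition~\ref{pro1}; and the double count of $e(G[Z])$ against $\sum_{v\in Z}d_Z(v)\ge\sum_{v\in Z}(a'(v)+b'(v))-|Z|$ (using $\mu_G(v)\ge 1$ from the no-isolated-vertices hypothesis) forces $|Z|\le 0$. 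All steps are valid for multigraphs since the handshake identity and the degeneracy peeling count edges with multiplicity. This is essentially the same extremal-plus-degeneracy argument one finds in \cite{SS2019}, so your proof aligns with the original source even though the present paper omits it.
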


Let $G$ be a graph and let $a,b:V(G)\rightarrow\mathbb{N}$ be two functions. For each partition $(A,B)$ of $V(G)$, we define the weight $\omega(A,B)$ of $(A,B)$ as
\[
\omega(A,B)=|E(G[A])|+|E(G[B])|+\sum_{u\in A}b(u)+\sum_{v\in B}a(v).
\]
Then, for each $u\in A$ and $v\in B$,  simple calculations show that
\begin{eqnarray}\label{weight1}
\omega(A\setminus\{u\},B\cup\{u\})-\omega(A,B)=d_B(u)-d_A(u)+a(u)-b(u),
\end{eqnarray}
\begin{eqnarray}\label{weight2}
\omega(A\cup\{v\},B\setminus\{v\})-\omega(A,B)=d_A(v)-d_B(v)+b(v)-a(v),
\end{eqnarray}
and
\begin{align}\label{weight3}
&\;\quad\omega(A\cup\{v\}\backslash\{u\},B\cup\{u\}\backslash\{v\})-\omega(A,B)\notag
\\&=d_B(u)-d_A(u)+a(u)-b(u)+d_A(v)-d_B(v)+b(v)-a(v)-2\mu_G(u,v).
\end{align}

\section{Proof of Theorem \ref{multigraph}}\label{Main-Result}
Throughout this section, let $G$ be a $\{K_4^-,C_5^+,K_{2,3},L_3\}$-free graph and $a,b:V(G)\rightarrow\mathbb{N}\setminus\{0,1\}$ be two functions such that $d_G(x)\ge a(x)+b(x)+2\mu_G(x)-3$ for each $x\in V(G)$. Clearly, $d_G(x)\geq1$ for each $x\in V(G)$. Thus, $\mu_G(x)\geq1$ for each $x\in V(G)$. Since there is no danger of confusion, the reference to $G$ in the subscript of $\mu_G$ will be dropped in the following proof.

Suppose for a contradiction that $G$ contains no $(a,b)$-feasible partitions. It follows from Proposition \ref{key} that there is no $(a,b)$-feasible pair in $G$. We may assume that
\begin{eqnarray}\label{=}
d_G(x)=a(x)+b(x)+2\mu(x)-3
\end{eqnarray}
for each $x\in V(G)$. Otherwise, we can increase $a,b$ to get functions $a',b'$ such that $a'\ge a$, $b'\ge b$ and $d_G(x)=a'(x)+b'(x)+2\mu(x)-3$ for each $x\in V(G)$. Clearly, the existence of an $(a',b')$-feasible partition would guarantee that of an $(a,b)$-feasible partition in $G$.

\begin{claim}\label{(a-1,b-1)-meager}
There exists an $(a-1,b-1)$-meager partition in $G$.
\end{claim}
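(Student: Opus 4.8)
The plan is to find the required partition by a weight-minimization argument combined with the structural exchange identities \eqref{weight1}--\eqref{weight3}. Among all partitions $(A,B)$ of $V(G)$, choose one that minimizes the weight $\omega(A,B)$; I claim such a partition is $(a-1,b-1)$-meager. Suppose not: then without loss of generality there is a nonempty $A'\subseteq A$ with $d_{A'}(x)\geq a(x)+\mu(x)-1$ for every $x\in A'$, i.e.\ $A'$ is $a$-nice, hence by Proposition~\ref{pro2} it is $a$-feasible. Symmetrically, were $B$ to fail, we would get a $b$-feasible subset of $B$. In either case we would produce an $(a,b)$-feasible \emph{pair} in $G$ (taking the feasible subset of $A$ together with any $b$-feasible subset of $B$, or noting that having one of the two sides feasible is enough to build such a pair), contradicting the standing assumption that $G$ has no $(a,b)$-feasible pair. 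So the real work is to show that minimality of $\omega$ forces \emph{both} sides to be meager, and to rule out the degenerate possibility that one side becomes empty along the way.

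In more detail, first I would show $A,B\neq\emptyset$: if $B=\emptyset$ then $A=V(G)$ and the condition \eqref{=} together with $\mu(x)\geq1$ gives $d_G(x)=d_A(x)\geq a(x)+\mu(x)-1$ for every $x$ (since $b(x)\geq2$), so $V(G)$ itself is $a$-nice and therefore $a$-feasible, again producing a feasible pair. Next, suppose $A$ is not $a$-meager, witnessed by a nonempty $A'\subseteq A$ with $d_{A'}(x)\geq a(x)+\mu(x)-1$ for all $x\in A'$. For any single vertex $u\in A'$, the move $A\setminus\{u\},B\cup\{u\}$ has weight change $d_B(u)-d_A(u)+a(u)-b(u)$ by \eqref{weight1}; minimality gives $d_B(u)-d_A(u)+a(u)-b(u)\geq0$, i.e.\ $d_A(u)\leq d_B(u)+a(u)-b(u)$. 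Combined with $d_A(u)+d_B(u)=d_G(u)=a(u)+b(u)+2\mu(u)-3$ this yields $d_A(u)\leq a(u)+\mu(u)-2<a(u)+\mu(u)-1$, contradicting $u\in A'$. Hence $A$ is $a$-meager; symmetrically $B$ is $b$-meager, so $(A,B)$ is the desired $(a-1,b-1)$-meager partition (recalling that ``$f$-meager'' is the statement $d_{X'}(x)\leq f(x)+\mu(x)-1$ for some $x$ in each $X'$, which is exactly the negation of ``$f$-nice'', and that $(a-1)$-degeneracy of a side with all degrees small is what is needed; here I use Proposition~\ref{pro1} to phrase meagerness in terms of not containing a feasible subset, should that framing be cleaner).

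The main obstacle I anticipate is the bookkeeping around the definitions: the claim is stated for $(a-1,b-1)$-\emph{meager} partitions, while the exchange identities and the no-feasible-pair hypothesis are most naturally phrased in terms of $a$-nice / $a$-feasible subsets, so I must be careful that ``$X$ is not $a$-meager'' really does hand me an $a$-nice (hence $a$-feasible) subset with the $+\mu(x)-1$ slack, and that the single-vertex weight comparison \eqref{weight1} is enough — I should not need the full two-vertex swap \eqref{weight3} for this claim, only \eqref{weight1} and \eqref{weight2}, though I would keep \eqref{weight3} in reserve in case a one-vertex move fails to reach a contradiction when $|A'|=1$. A secondary subtlety is ensuring the produced pair is genuinely a \emph{pair} in the sense of Proposition~\ref{key} (disjoint, but \emph{not} required to cover $V(G)$), so exhibiting an $a$-feasible subset of $A$ alone already suffices together with the empty set on the other side if $b$ allowed $0$ — but since $b\geq2$ we instead note $V(G)\setminus A'$ or a suitable subset works, or simply invoke that any nonempty $a$-feasible set disjoint from a nonempty $b$-feasible set is a pair; I will arrange the argument so that one feasible side is all that is logically needed, mirroring how Proposition~\ref{key} is applied.
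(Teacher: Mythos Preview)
Your core argument is correct and is \emph{not} the paper's route. You minimize $\omega(A,B)$ over all partitions and then, assuming $A$ fails to be $(a-1)$-meager, pick a vertex $u$ in an $a$-nice witness $A'\subseteq A$; minimality together with \eqref{weight1} and \eqref{=} gives $2d_A(u)\le 2a(u)+2\mu(u)-3$, hence $d_A(u)\le a(u)+\mu(u)-2$, contradicting $d_A(u)\ge d_{A'}(u)\ge a(u)+\mu(u)-1$. The one detail you should make explicit is that the single-vertex move is legal: any $a$-nice set has at least two vertices (a singleton has internal degree $0<a+\mu-1$), so $|A|\ge 2$ and $(A\setminus\{u\},B\cup\{u\})$ is a genuine partition.

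The paper instead takes a \emph{minimum} $a$-nice set $S\subsetneq V(G)$ (after noting $V(G)\setminus\{u\}$ is $a$-nice for any $u$), uses the standing no-feasible-pair hypothesis to force $T=V(G)\setminus S$ to be $(b-1)$-degenerate and hence $(b-1)$-meager, and then transfers one vertex $v$ from $S$ to $T$: minimality of $S$ makes $S\setminus\{v\}$ $(a-1)$-meager, and a short degree count keeps $T\cup\{v\}$ $(b-1)$-meager. Your weight-minimization approach is arguably cleaner here --- it never invokes the no-feasible-pair assumption and needs no auxiliary ``minimum nice set'' --- while the paper's argument stays inside the nice/feasible vocabulary and avoids the integrality rounding step.

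Drop the material in your first and last paragraphs about manufacturing an $(a,b)$-feasible pair from a single feasible side: that reasoning is incorrect as written (an $a$-feasible $A'\subseteq A$ alone is not a pair, and the empty set is never $b$-feasible since $b\ge 2$) and, more to the point, entirely unnecessary --- your second paragraph already contains a complete proof.
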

\begin{proof}
Observe that there is an $a$-nice proper subset of $V(G)$. Indeed, for a fixed $u\in V(G)$ and each $x\in V(G)\setminus\{u\}$, it follows from \eqref{=} that
\[
d_{V(G)\setminus\{u\}}(x)=d_G(x)-\mu(u,x)\geq a(x)+b(x)+\mu(x)-3\geq a(x)+\mu(x)-1,
\]
meaning that $V(G)\setminus\{u\}$ is $a$-nice. Let $S$ be a minimum $a$-nice subset of $V(G)$ and $T=V(G)\setminus S$. Clearly, $|S|\ge2$ and $T\neq\emptyset$. Note that $S$ is $a$-feasible by Proposition \ref{pro2}. Since $G$ has no $(a,b)$-feasible pair, $T$ contains no $b$-feasible subset. By Proposition \ref{pro1}, $T$ is $(b-1)$-degenerate, and thus is $(b-1)$-meager. Take $v\in S$ and it follows that $S\setminus\{v\}$ is $(a-1)$-meager by the minimality of $S$. Note that $d_S(v)\geq a(v)+\mu(v)-1$. This together with \eqref{=} yields that $d_{T\cup\{v\}}(v)=d_T(v)\leq b(v)+\mu(v)-2$. Thus, $T\cup\{v\}$ is $(b-1)$-meager. If not, then there is a $b$-nice subset $T'\subseteq T\cup\{v\}$. Since $T$ is $(b-1)$-meager, we have $v\in T'$ and $d_{T\cup\{v\}}(v)\geq d_{T'}(v)\geq b(v)+\mu(v)-1$, a contradiction. Consequently, $(S\setminus\{v\},T\cup\{v\})$ is an $(a-1,b-1)$-meager partition in $G$, as desired.
\end{proof}

Let $\mathscr{P}$ be the family of all $(a-1,b-1)$-meager partitions $(A,B)$ satisfying that $\omega(A,B)$ is maximum. For any $(A,B)\in\mathscr{P}$, let $A^-=\{u\in A\mid d_A(u)\leq a(u)+\mu(u)-2\}$ and $B^-=\{v\in B\mid d_B(v)\leq b(v)+\mu(v)-2\}$. Note that both $A^-$ and $B^-$ are nonempty by the definition of $\mathscr{P}$. So for any $v\in B^-$, $d_A(v)=d_G(v)-d_B(v)\geq a(v)+\mu(v)-1$, implying $|A|\geq2$. Similarly, $|B|\geq2$.

\begin{claim}\label{containment}
For any $(A,B)\in\mathscr{P}$, $u\in A^-$ and $v\in B^-$, we have $A\cup\{v\}$ is not $(a-1)$-meager and every $a$-nice subset of $A\cup\{v\}$ contains $u$ and $v$; furthermore, $B\cup\{u\}$ is not $(b-1)$-meager and every $b$-nice subset of $B\cup\{u\}$ contains $u$ and $v$.
\end{claim}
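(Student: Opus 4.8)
The plan is to prove the first half (the statements about $A\cup\{v\}$) and note that the second half follows by the symmetric argument, swapping the roles of $A$ and $B$ and of $a$ and $b$. First I would observe that $A$ itself is $(a-1)$-meager, so if $A\cup\{v\}$ were also $(a-1)$-meager, then $(A\cup\{v\},B\setminus\{v\})$ would be an $(a-1,b-1)$-meager partition. I would then compare its weight with that of $(A,B)$ using \eqref{weight2}: $\omega(A\cup\{v\},B\setminus\{v\})-\omega(A,B)=d_A(v)-d_B(v)+b(v)-a(v)$. Since $v\in B^-$ we have $d_B(v)\le b(v)+\mu(v)-2$, hence $d_A(v)=d_G(v)-d_B(v)\ge a(v)+\mu(v)-1$ by \eqref{=}, so the displayed difference is at least $2\mu(v)-1\ge 1>0$, contradicting the maximality of $\omega(A,B)$ among $(a-1,b-1)$-meager partitions. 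Therefore $A\cup\{v\}$ is not $(a-1)$-meager, and in particular it contains an $a$-nice subset, say $A'$.

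Next I would show every such $A'$ contains $v$: since $A$ is $(a-1)$-meager, it cannot contain the $a$-nice set $A'$ (an $a$-nice set has $d_{A'}(x)\ge a(x)+\mu(x)-1$ for all its vertices, violating the $(a-1)$-meager condition applied to $A'$ itself); hence $v\in A'$. It remains to show $u\in A'$. Here I would argue by contradiction: suppose $u\notin A'$, so $A'\subseteq (A\setminus\{u\})\cup\{v\}$. The key is that $A'$ is $a$-nice, hence $a$-feasible by Proposition \ref{pro2}, so $(A',B\setminus\{v\}\setminus(A'\setminus\{v\}))$ — more carefully, I would exhibit an $(a,b)$-feasible pair. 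Concretely, consider $A'$ (which is $a$-feasible) together with $B$: since $v\in A'$ and $v\in B$, these are not disjoint, so instead I would work with $A'$ and $B\setminus\{v\}$. But $B\setminus\{v\}$ need not be $b$-feasible. The cleaner route is to swap $u$ and $v$: set $A^*=(A\setminus\{u\})\cup\{v\}$ and $B^*=(B\setminus\{v\})\cup\{u\}$. I claim $(A^*,B^*)$ is $(a-1,b-1)$-meager. For $A^*$: it contains no $a$-nice subset, because any such subset would be contained in $A\cup\{v\}$ and avoid $u$, and I will have just shown (in the step I am now proving) that every $a$-nice subset of $A\cup\{v\}$ contains $u$ — so this is circular, and I must instead deduce non-$a$-niceness of $A^*$ directly. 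The correct formulation: assume for contradiction that there is an $a$-nice $A'\subseteq A\cup\{v\}$ with $u\notin A'$. Then $A'$ is $a$-feasible. Now pair $A'$ with $B$: they overlap only in $v\in A'\cap B$. Replace: $(A', B\setminus\{v\})$ is not yet a feasible pair, but apply \eqref{weight3} to the swap $(A^*,B^*)$ instead and use that $A'$ being $a$-nice forces $d_{A^*}(v)\ge d_{A'}(v)\ge a(v)+\mu(v)-1$, so $A^*$ is not $(a-1)$-meager at $v$; combined with $u\in B^*$ having $d_{B^*}(u)=d_{B}(u)+\mu(u,v)\le (b(u)+\mu(u)-2)+\mu(u)$, one checks $B^*$ remains $(b-1)$-meager, giving an $(a-1,b-1)$-meager partition whose weight \eqref{weight3} strictly exceeds $\omega(A,B)$ — the dominant terms being $d_A(v)-d_B(v)\ge 2\mu(v)-1$ and $d_B(u)-d_A(u)\ge 2\mu(u)-1$ against the loss $-2\mu(u,v)\ge -2\mu(u)$ — contradicting maximality.

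The main obstacle, as the above fumbling indicates, is organizing the swap argument so as not to be circular and so that all four meagerness conditions ($(a-1)$ on the two new left-sides, $(b-1)$ on the two new right-sides) are genuinely verified using only \eqref{=}, the membership $u\in A^-$, $v\in B^-$, and the structural hypotheses ($G$ is $\{K_4^-,C_5^+,K_{2,3},L_3\}$-free) where degree bookkeeping alone is insufficient — in particular controlling how many common neighbours $u$ and $v$ can have, which is exactly where the forbidden-subgraph conditions will enter. I expect the clean writeup to fix $A^*=(A\setminus\{u\})\cup\{v\}$, $B^*=(B\setminus\{v\})\cup\{u\}$ from the outset, verify it is $(a-1,b-1)$-meager (the non-trivial part being that a hypothetical $a$-nice or $b$-nice subset would have to contain both $u$ and $v$, handled by the already-established fact that $A$ and $B$ are $(a-1)$- and $(b-1)$-meager), and then derive $\omega(A^*,B^*)>\omega(A,B)$ from \eqref{weight3} using $d_A(v)-d_B(v)\ge 2\mu(v)-1$, $d_B(u)-d_A(u)\ge 2\mu(u)-1$, $a(u)-b(u)+b(v)-a(v)$ bounded, and $2\mu(u,v)\le 2\min\{\mu(u),\mu(v)\}$, so that the sum is strictly positive — a contradiction completing the proof.
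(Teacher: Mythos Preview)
Your opening is fine and matches the paper: the weight computation via \eqref{weight2} and \eqref{=} gives $\omega(A\cup\{v\},B\setminus\{v\})-\omega(A,B)\ge 1$, so $(A\cup\{v\},B\setminus\{v\})$ is not $(a-1,b-1)$-meager; since $B\setminus\{v\}$ inherits $(b-1)$-meagerness from $B$, it is $A\cup\{v\}$ that fails, whence an $a$-nice $A'\subseteq A\cup\{v\}$ exists and must contain $v$.

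The argument collapses when you try to force $u\in A'$. Your swap partition $(A^*,B^*)=((A\setminus\{u\})\cup\{v\},(B\setminus\{v\})\cup\{u\})$ cannot be $(a-1,b-1)$-meager under the very hypothesis you are trying to contradict: if some $a$-nice $A'\subseteq A\cup\{v\}$ omits $u$, then $A'\subseteq A^*$, so $A^*$ is not $(a-1)$-meager. Even setting that aside, your weight estimate is wrong: plugging $u\in A^-$, $v\in B^-$ into \eqref{weight3} and \eqref{=} gives only $\omega(A^*,B^*)-\omega(A,B)\ge 2-2\mu(u,v)$, which is nonpositive as soon as $\mu(u,v)\ge 1$ (and Claim~\ref{complete}, proved later, shows $uv\in E(G)$ always). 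Finally, the forbidden-subgraph hypotheses are a red herring here; the paper's proof of this claim uses none of them.

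What you are missing is that the claim treats $A\cup\{v\}$ and $B\cup\{u\}$ \emph{simultaneously}. By the symmetric weight argument there is also a $b$-nice $B'\subseteq B\cup\{u\}$ with $u\in B'$. Now play $A'$ and $B'$ against each other using the standing assumption that $G$ has no $(a,b)$-feasible pair. If $u\notin A'$ and $v\notin B'$ then $A'\subseteq (A\cup\{v\})\setminus\{u\}$ and $B'\subseteq (B\cup\{u\})\setminus\{v\}$ are disjoint and, being nice, are feasible by Proposition~\ref{pro2} --- an $(a,b)$-feasible pair, contradiction. In the mixed case, say $u\in A'$ but $v\notin B'$, use $B'$ to bound $d_{B\setminus\{v\}}(u)\ge d_{B'}(u)\ge b(u)+\mu(u)-1$, which via \eqref{=} forces $d_{A'}(u)\le d_{A\cup\{v\}}(u)=d_G(u)-d_{B\setminus\{v\}}(u)\le a(u)+\mu(u)-2$, contradicting $a$-niceness of $A'$ at $u$. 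This is the missing idea; no swap and no structural hypotheses are needed.
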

\begin{proof}
Note that $\omega(A\cup\{v\},B\setminus\{v\})-\omega(A,B)=d_G(v)-2d_B(v)+b(v)-a(v)$ by \eqref{weight2}. This together with \eqref{=} and $d_B(v)\le b(v)+\mu(v)-2$ implies that
$
\omega(A\cup\{v\},B\setminus\{v\})-\omega(A,B)\geq1.
$
Thus $(A\cup\{v\},B\setminus\{v\})$ cannot be an $(a-1,b-1)$-meager partition by the maximality of $\omega(A,B)$. Since $B\setminus\{v\}$ is $(b-1)$-meager, $A\cup\{v\}$ cannot be $(a-1)$-meager. Similarly, $B\cup\{u\}$ is not $(b-1)$-meager. Hence there exist an $a$-nice subset $A'\subseteq A\cup\{v\}$ and a $b$-nice subset $B'\subseteq B\cup\{u\}$. Since $A$ is $(a-1)$-meager and $B$ is $(b-1)$-meager, we have $v\in A'$ and $u\in B'$. Now, we prove that $u\in A'$ and $v\in B'$. If $u\notin A'$ and $v\notin B'$, then $(A',B')$ is an $(a,b)$-feasible pair by Proposition \ref{pro2}, a contradiction. Suppose by symmetry that $u\in A'$ and $v\notin B'$. Clearly, $B'\subseteq (B\cup\{u\})\setminus\{v\}$ and $d_{B\setminus\{v\}}(u)=d_{B\cup\{u\}\setminus\{v\}}(u)\geq d_{B'}(u) \geq b(u)+\mu(u)-1$. Thus, $d_{A'}(u)\leq d_{A\cup\{v\}}(u)=d_{G}(u)-d_{B\setminus\{v\}}(u)\leq a(u)+\mu(u)-2$, a contradiction.
\end{proof}

Let $A^\ast\subseteq A$ such that $A^\ast\cap A^-\neq\emptyset$. By Claim \ref{containment}, $B\cup A^\ast$ is not $(b-1)$-meager and there exists a $b$-nice subset of $B\cup A^\ast$, indicating that $A\setminus A^\ast$ is $(a-1)$-degenerate as $G$ has no $(a,b)$-feasible pair. Similarly, if $B^\ast\subseteq B$ such that $B^\ast\cap B^-\neq\emptyset$, then $B\setminus B^\ast$ is $(b-1)$-degenerate. We point out that Claim \ref{containment} will be also used in this form frequently.

\begin{claim}\label{complete}
For any $(A,B)\in\mathscr{P}$, every vertex in $A^-$ is adjacent to every vertex in $B^-$.
\end{claim}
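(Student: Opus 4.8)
The plan is to argue by contradiction, leaning entirely on Claim~\ref{containment} and the degree identity \eqref{=}; no structural hypothesis on $G$ is needed for this particular claim. Suppose some $u\in A^-$ and $v\in B^-$ are non-adjacent, i.e.\ $\mu(u,v)=0$. By Claim~\ref{containment}, $A\cup\{v\}$ is not $(a-1)$-meager, which is precisely to say that $A\cup\{v\}$ contains an $a$-nice subset $A'$; and again by Claim~\ref{containment}, every such $A'$ satisfies $\{u,v\}\subseteq A'$.

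The heart of the argument is then a one-line degree count. Since $A'$ is $a$-nice and $u\in A'$, we have $d_{A'}(u)\ge a(u)+\mu(u)-1$. On the other hand $A'\subseteq A\cup\{v\}$, so $d_{A'}(u)\le d_{A\cup\{v\}}(u)=d_A(u)+\mu(u,v)=d_A(u)$, using $\mu(u,v)=0$. Combining the two inequalities yields $d_A(u)\ge a(u)+\mu(u)-1$, which directly contradicts $u\in A^-$. Hence $u$ and $v$ must be adjacent, and since $u\in A^-$ and $v\in B^-$ were arbitrary, the claim follows.

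I expect no real obstacle here: the only point requiring care is to invoke Claim~\ref{containment} in the sharper ``contains $u$ and $v$'' form rather than just ``is not $(a-1)$-meager''. (An alternative would be to use the swap bound \eqref{weight3}, which for non-adjacent $u,v$ gives $\omega(A\cup\{v\}\setminus\{u\},B\cup\{u\}\setminus\{v\})-\omega(A,B)\ge 2$ after substituting \eqref{=} and the defining inequalities of $A^-$ and $B^-$; but then one would still have to verify that the swapped partition is $(a-1,b-1)$-meager to contradict the maximality of $\omega$, so the route through Claim~\ref{containment} is cleaner.) The symmetric argument through a $b$-nice subset of $B\cup\{u\}$ produces the same conclusion.
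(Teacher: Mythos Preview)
Your proof is correct and essentially identical to the paper's: both argue by contradiction, invoke Claim~\ref{containment} to obtain an $a$-nice subset $A'\subseteq A\cup\{v\}$ with $u\in A'$, and then derive the contradiction $a(u)+\mu(u)-1\le d_{A'}(u)\le d_A(u)+\mu(u,v)=d_A(u)\le a(u)+\mu(u)-2$. The only cosmetic difference is that you explicitly note $v\in A'$ as well (which is true but unused here), and you record the alternative swap-weight route that the paper does not mention.
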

\begin{proof}
Suppose that there exist $u\in A^-$ and $v\in B^-$ such that $\mu(u,v)=0$. By Claim \ref{containment}, there is an $a$-nice subset $A'\subseteq A\cup\{v\}$ such that $u\in A'$, implying that $d_{A'}(u)\geq a(u)+\mu(u)-1$. However, $d_{A'}(u)\leq d_{A\cup\{v\}}(u)=d_A(u)+\mu(u,v)\leq a(u)+\mu(u)-2$, a contradiction.
\end{proof}

Recall that both $A^-$ and $B^-$ are nonempty. By Claim \ref{complete}, either $|A^-|=|B^-|=2$ or $\min\{|A^-|,|B^-|\}=1$ as $G$ is $K_{2,3}$-free.

\begin{claim}\label{nonempty}
For any $(A,B)\in\mathscr{P}$, we have $A\setminus A^-\neq\emptyset$ and $B\setminus B^-\neq\emptyset$.
\end{claim}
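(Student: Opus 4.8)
The plan is to argue by contradiction. Since $\mathscr P$, Claim~\ref{containment} and Claim~\ref{complete} are all invariant under the symmetry exchanging $(A,a,A^-)$ with $(B,b,B^-)$, I may assume $A=A^-$ and aim for a contradiction. Fix $v\in B^-$. The first step is that Claim~\ref{containment} pins down $A\cup\{v\}$ completely: every $a$-nice subset of $A\cup\{v\}$ must contain $v$ and every vertex of $A^-=A$, hence equals $A\cup\{v\}$; and since $A\cup\{v\}$ is not $(a-1)$-meager it \emph{has} an $a$-nice subset, so $A\cup\{v\}$ itself is $a$-nice. Feeding $d_{A\cup\{v\}}(u)=d_A(u)+\mu(u,v)\ge a(u)+\mu(u)-1$ against $d_A(u)\le a(u)+\mu(u)-2$ (valid since $u\in A^-$) for each $u\in A$ then yields two facts I will use throughout: $\mu(u,v)\ge1$, so $v$ is adjacent to every vertex of $A$; and, since $\mu(u,v)\le\mu(u)$, also $d_A(u)\ge a(u)-1\ge1$, so $G[A]$ has no isolated vertex.

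By the remark following Claim~\ref{complete} (which uses $K_{2,3}$-freeness) and $|A^-|=|A|\ge2$, either $|A^-|=|B^-|=2$ or $|B^-|=1$. In the first case, write $A=\{u_1,u_2\}$ and $B^-=\{v_1,v_2\}$: then $u_1u_2\in E(G)$ (because $d_A(u_1)\ge1$) and all four edges $u_iv_j$ are present, so $\{u_1,u_2,v_1,v_2\}$ spans a $K_4^-$, contradicting that $G$ is $K_4^-$-free.

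The case $B^-=\{v\}$ is the one I expect to be the real obstacle. Here $B\setminus\{v\}\neq\emptyset$ (as $|B|\ge2$), and by Claim~\ref{containment} in the form applied after it (with $B^*=\{v\}$) it is $(b-1)$-degenerate, so there is $w\in B\setminus\{v\}$ with $d_{B\setminus\{v\}}(w)\le b(w)-1$. Since $w\notin B^-$ we have $d_B(w)\ge b(w)+\mu(w)-1$; combining this with $d_B(w)=d_{B\setminus\{v\}}(w)+\mu(w,v)\le b(w)-1+\mu(w,v)\le b(w)-1+\mu(w)$ forces equality throughout, so $d_B(w)=b(w)+\mu(w)-1$ and $\mu(w,v)=\mu(w)\ge1$; in particular $vw\in E(G)$. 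The structural crux is then that $w$ can have no neighbour in $A$: if $\mu(w,u)\ge1$ for some $u\in A$, then (using $d_A(u)\ge1$) $u$ has a neighbour $u'\in A$, and since $v$ is adjacent to each of $u,u',w$, the four distinct vertices $u,u',v,w$ again span a $K_4^-$, a contradiction. Hence $d_A(w)=0$, so $d_G(w)=d_B(w)=b(w)+\mu(w)-1$; comparing with the standing normalization $d_G(w)=a(w)+b(w)+2\mu(w)-3$ from \eqref{=} gives $a(w)+\mu(w)=2$, which is impossible since $a(w)\ge2$ and $\mu(w)\ge1$. This contradiction settles all cases, so $A\setminus A^-\neq\emptyset$, and $B\setminus B^-\neq\emptyset$ follows by symmetry.
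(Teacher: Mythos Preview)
Your proof is correct and takes a genuinely different route from the paper's. The paper does not assume $A=A^-$ for contradiction; it argues directly in each of the two cases $|A^-|=|B^-|=2$ and $\min\{|A^-|,|B^-|\}=1$. In the first case, rather than using $u_1u_2\in E(G)$ (which relies on your hypothesis $A=A^-$ via $d_A(u_1)\ge1$), the paper works with the $b$-nice subset $B'\subseteq B\cup\{u\}$ supplied by Claim~\ref{containment}: since $b(y)\ge2$, every vertex of $B'$ has at least two neighbours there, and this fails for $v_i$ when $B=B^-$ because $v_1v_2\notin E(G)$. In the second case the paper takes $A^-=\{u\}$ (so $A\setminus A^-\neq\emptyset$ is immediate from $|A|\ge2$) and, to show $B\setminus B^-\neq\emptyset$, produces $x\in A\setminus\{u\}$ with $ux\in E(G)$ and a neighbour $z\in B$; if $B=B^-$ then $z\in B'$ forces a second neighbour $z'\in B^-$, and $\{u,x,z,z'\}$ spans a $K_4^-$. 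Your endgame in this case---showing $d_A(w)=0$ and then invoking the normalization \eqref{=} to obtain the impossible equation $a(w)+\mu(w)=2$---is a clean shortcut that avoids hunting for a forbidden configuration; the paper's argument for this claim never appeals to \eqref{=}, relying solely on $K_4^-$-freeness.
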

\begin{proof}
For each $u\in A^-$, there exists a $b$-nice subset $B'\subseteq B\cup\{u\}$ by Claim \ref{containment}. It follows that $d_{B'}(y)\geq b(y)+\mu(y)-1\geq b(y)$ for each $y\in B'$, implying $|N_{B'}(y)|\geq2$. If $|A^-|=|B^-|=2$, then we let $B^-=\{v_1,v_2\}$. Since $G$ is $K_4^-$-free, $v_1v_2\notin E(G)$ by Claim \ref{complete}. Thus, $N_{B'}(v_1)=N_{B'}(v_2)=\{u\}$ providing $B=B^-$. This leads to a contradiction as $v_i\in B'$ for some $i=1,2$, implying $B\setminus B^-\neq\emptyset$. Similarly, $A\setminus A^-\neq\emptyset$. If $\min\{|A^-|,|B^-|\}=1$, then we assume that $A^-=\{u\}$. Clearly, $A\setminus A^-\neq\emptyset$ as $|A|\geq2$. Since $A$ is $(a-1)$-meager, there exists $x\in A\setminus\{u\}$ such that $d_{A\setminus\{u\}}(x)\leq a(x)+\mu(x)-2$. Note that $d_{A\setminus\{u\}}(x)+\mu(u,x)=d_A(x)\geq a(x)+\mu(x)-1$. It follows that $\mu(u,x)\geq1$ and $d_A(x)\leq a(x)+2\mu(x)-2$, yielding that $ux\in E(G)$ and $d_B(x)=d_G(x)-d_A(x)\geq b(x)-1\geq1$. Suppose that $B=B^-$ and $z\in N_B(x)$. Choose $v=z$ in Claim \ref{containment}, implying $z\in B'$. Since $|N_{B'}(z)|\geq2$, there exists $z'\in B^-\setminus\{z\}$ such that $zz'\in E(G)$. By Claim \ref{complete}, $\{u,x,z,z'\}$ forms a $K_4^-$, a contradiction. Thus $B\setminus B^-\neq\emptyset$.
\end{proof}

For any $(A,B)\in\mathscr{P}$, let $D_A=\{u\in A\mid d_A(u)\leq a(u)-1\}$ and $D_B=\{v\in B\mid d_B(v)\leq b(v)-1\}$. Clearly, $D_A\subseteq A^-$ and $D_B\subseteq B^-$.
\begin{claim}\label{degree}
For any $(A,B)\in\mathscr{P}$, $u\in A^-$ and $v\in B^-$, if either $u\in D_A$ or $v\in D_B$, then $(A\cup\{v\}\setminus\{u\},B\cup\{u\}\setminus\{v\})\in\mathscr{P}$. Moreover, if $u\in D_A$, then $\mu(u,v)=\mu(u)$, $d_A(u)=a(u)-1$ and $d_B(v)=b(v)+\mu(v)-2$;  if $v\in D_B$, then $\mu(u,v)=\mu(v)$, $d_B(v)=b(v)-1$ and $d_A(u)=a(u)+\mu(u)-2$.
\end{claim}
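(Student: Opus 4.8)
The plan is to swap $u$ and $v$ across $(A,B)$ and play off the maximality of $\omega$ on $\mathscr{P}$. Set $A'=(A\cup\{v\})\setminus\{u\}$ and $B'=(B\cup\{u\})\setminus\{v\}$; since $|A|,|B|\ge 2$, the pair $(A',B')$ is again a partition of $V(G)$. First I would show that $(A',B')$ is $(a-1,b-1)$-meager. By Claim \ref{containment}, every $a$-nice subset of $A\cup\{v\}$ contains $u$; since $A'$ is obtained from $A\cup\{v\}$ by deleting $u$, it contains no $a$-nice subset, hence $A'$ is $(a-1)$-meager. By Claim \ref{containment} again, every $b$-nice subset of $B\cup\{u\}$ contains $v$, and $B'$ is obtained from $B\cup\{u\}$ by deleting $v$, so $B'$ is $(b-1)$-meager.

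Next I would evaluate the weight change using \eqref{weight3}. Substituting $d_B(u)=d_G(u)-d_A(u)$ and $d_A(v)=d_G(v)-d_B(v)$ and invoking \eqref{=}, the right-hand side of \eqref{weight3} simplifies to
\[
\omega(A',B')-\omega(A,B)=2\bigl(\alpha+\beta-1-\mu(u,v)\bigr),
\]
where $\alpha:=a(u)+\mu(u)-1-d_A(u)$ and $\beta:=b(v)+\mu(v)-1-d_B(v)$; note $\alpha\ge 1$ and $\beta\ge 1$ because $u\in A^-$ and $v\in B^-$.

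Now assume $u\in D_A$. Then $d_A(u)\le a(u)-1$ gives $\alpha\ge\mu(u)\ge\mu(u,v)$, so $\alpha+\beta-1\ge\mu(u,v)$ and $\omega(A',B')\ge\omega(A,B)$. Since $(A',B')$ is an $(a-1,b-1)$-meager partition, maximality of $\omega(A,B)$ forces $\omega(A',B')=\omega(A,B)$, hence $(A',B')\in\mathscr{P}$ and $\alpha+\beta-1=\mu(u,v)$. Together with $\alpha\ge\mu(u,v)$ and $\beta\ge 1$, this forces $\alpha=\mu(u,v)$ and $\beta=1$; then $\mu(u)\le\alpha=\mu(u,v)\le\mu(u)$ gives $\mu(u,v)=\mu(u)$, and plugging back yields $d_A(u)=a(u)-1$ and $d_B(v)=b(v)+\mu(v)-2$. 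The case $v\in D_B$ is symmetric: now $d_B(v)\le b(v)-1$ gives $\beta\ge\mu(v)\ge\mu(u,v)$ while $\alpha\ge 1$, and the forced equality delivers $\mu(u,v)=\mu(v)$, $d_B(v)=b(v)-1$ and $d_A(u)=a(u)+\mu(u)-2$.

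I do not anticipate a genuine difficulty here; the argument rides entirely on Claim \ref{containment} (to certify that $(A',B')$ is $(a-1,b-1)$-meager) and on the arithmetic of the weight identity. The only delicate point is to keep the slack on the correct side — $\alpha\ge\mu(u)$ when $u\in D_A$ versus $\beta\ge 1$, and the reverse when $v\in D_B$ — so that the single forced equality $\alpha+\beta-1=\mu(u,v)$ outputs exactly the three equalities attached to the correct hypothesis.
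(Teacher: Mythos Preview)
Your proposal is correct and follows essentially the same approach as the paper: use Claim~\ref{containment} to see that the swapped partition is $(a-1,b-1)$-meager, then compute the weight difference via \eqref{weight3} and \eqref{=} and let maximality force equality throughout. Your packaging via $\alpha,\beta$ is a clean way to read off the three equalities, but the underlying argument is identical to the paper's.
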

\begin{proof}
Since every $a$-nice subset of $A\cup\{v\}$ contains $u$ by Claim \ref{containment}, $A\cup\{v\}\setminus\{u\}$ is $(a-1)$-meager. Similarly, $B\cup\{u\}\setminus\{v\}$ is $(b-1)$-meager. Thus $(A\cup\{v\}\setminus\{u\},B\cup\{u\}\setminus\{v\})$ is an $(a-1,b-1)$-meager partition. By \eqref{weight3}, 
$\omega(A\cup\{v\}\backslash\{u\},B\cup\{u\}\backslash\{v\})-\omega(A,B)=(d_G(u)-2d_A(u)+a(u)-b(u))+(d_G(v)-2d_B(v)+b(v)-a(v))-2\mu(u,v)$.
Suppose by symmetry that $u\in D_A$. Since $d_A(u)\leq a(u)-1$ and $d_B(v)\leq b(v)+\mu(v)-2$, by \eqref{=}, we have
$$\omega(A\cup\{v\}\backslash\{u\},B\cup\{u\}\backslash\{v\})-\omega(A,B)\geq(2\mu(u)-1)+1-2\mu(u,v)=2(\mu(u)-\mu(u,v))\geq0.$$
By the maximality of $\omega(A,B)$, $\omega(A\cup\{v\}\backslash\{u\},B\cup\{u\}\backslash\{v\})=\omega(A,B)$. Thus $(A\cup\{v\}\setminus\{u\},B\cup\{u\}\setminus\{v\})\in\mathscr{P}$, $\mu(u,v)=\mu(u)$, $d_A(u)=a(u)-1$ and $d_B(v)=b(v)+\mu(v)-2$.
\end{proof}

By Claim \ref{degree}, $D_A=\{u\in A\mid d_A(u)=a(u)-1\}$ and $D_B=\{v\in B\mid d_B(v)=b(v)-1\}$; in addition, $d_A(u)\geq a(u)-1$ and $d_B(v)\geq b(v)-1$ for each $u\in A$ and $v\in B$.

\begin{claim}\label{min}
For any $(A,B)\in\mathscr{P}$, we have $\min\{|A^-|,|B^-|\}=1$.
\end{claim}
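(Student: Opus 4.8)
The plan is to argue by contradiction. Recall that, by the remark preceding Claim~\ref{nonempty}, we already know $|A^-|=|B^-|=2$ or $\min\{|A^-|,|B^-|\}=1$, so it suffices to rule out $|A^-|=|B^-|=2$. Assume this, writing $A^-=\{u_1,u_2\}$ and $B^-=\{v_1,v_2\}$. By Claim~\ref{complete} every $u_i$ is adjacent to every $v_j$, and since each of $u_1u_2\in E(G)$ or $v_1v_2\in E(G)$ would create a $K_4^-$, the set $\{u_1,u_2,v_1,v_2\}$ induces a chordless quadrilateral $Q=u_1v_1u_2v_2$ (possibly carrying multiple edges). The first step is to convert the excluded subgraphs into local statements about $Q$: since $G$ is $K_{2,3}$-free, $u_1$ and $u_2$ have no common neighbour outside $\{v_1,v_2\}$, and symmetrically $v_1,v_2$ have no common neighbour outside $\{u_1,u_2\}$; feeding this into the $C_5^+$-freeness shows no edge of $Q$ lies on a triangle, and feeding it into the $L_3$-freeness shows no edge of $Q$ lies on a quadrilateral other than $Q$ itself (this is the single use of the $L_3$-free hypothesis).

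The second and main step is to manufacture a forbidden configuration out of the meagerness/maximality apparatus. By Claim~\ref{containment} (used in the form recorded right after it), $A\cup\{v_1\}$ admits an $a$-nice subset $A'$, every such $A'$ contains $u_1,u_2,v_1$, and symmetric statements hold for $B\cup\{u_1\}$ and $B\cup\{u_2\}$. Since $a,b$ take values $\ge 2$, in any $a$-nice set $A'$ we have $d_{A'}(x)\ge a(x)+\mu(x)-1\ge\mu(x)+1$, which strictly exceeds the multiplicity of any single edge at $x$; hence inside $A'$ the vertex $u_1$ has a neighbour $w\notin\{u_2,v_1,v_2\}$, necessarily lying in $A$, and by the first step $w$ is non-adjacent to each of $u_2,v_1,v_2$. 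Applying the same degree observation at $w$ yields a further neighbour of $w$ in $A'$, and the first step forbids most of its adjacencies to $V(Q)$. Pursuing this analysis inside the $a$-nice and $b$-nice sets, while using Claim~\ref{nonempty} to place the reached vertices relative to $A\setminus A^-$, $B\setminus B^-$, and Claim~\ref{degree} together with \eqref{=} to pin down the degrees $d_A(u_i),d_B(v_j)$ and the cross-multiplicities $\mu(u_i,v_j)$, I aim to force either a common neighbour of $\{u_1,u_2\}$ (or of $\{v_1,v_2\}$) off $Q$ --- a $K_{2,3}$ --- or a second quadrilateral meeting $Q$ in a single edge --- an $L_3$ --- contradicting the hypotheses.

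The hard part will be the bookkeeping with multiplicities. A single parallel bundle of multiplicity $\mu(v)$ can on its own almost saturate the requirement $d(v)\ge a(v)+\mu(v)-1$ inside an $a$-nice set, so one can extract only one genuinely new neighbour at each stage, and naive neighbour-chasing tends to terminate in harmless long induced cycles rather than in a forbidden subgraph. To obtain enough rigidity I would split into the case where every cross-edge $u_iv_j$ of $Q$ is simple and the case where some $\mu(u_i,v_j)\ge 2$. In the simple case the swap inequality from the proof of Claim~\ref{degree} already forces $d_A(u_i)=a(u_i)+\mu(u_i)-2$ and $d_B(v_j)=b(v_j)+\mu(v_j)-2$, which tightly constrains all the neighbourhoods and should let the forbidden-subgraph search terminate; the heavy-edge case I would reduce to it by observing that $\mu(u_i,v_j)\ge 2$ forces $\mu(u_i),\mu(v_j)\ge 2$ and re-running the weight computations \eqref{weight1}--\eqref{weight3}. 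The real obstacle is therefore not any single idea but closing this two-case analysis without gaps, in particular ruling out the stubborn near-miss configurations in which everything around $Q$ is an induced long cycle.
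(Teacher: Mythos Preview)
Your proposal is not a proof; you acknowledge this yourself in the final paragraph, where the ``real obstacle'' of closing the case analysis is left open. The neighbour-chasing you outline inside a fixed $a$-nice set $A'\subseteq A\cup\{v_1\}$ has no built-in termination: each step produces one new vertex, but nothing forces that vertex back towards $Q$, and the forbidden-subgraph constraints you list only rule out short returns, not long induced paths. So as written the argument does not reach a contradiction.

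The paper's proof is quite different and avoids this difficulty. It never works inside a nice set. Instead it uses the degeneracy consequences of Claim~\ref{containment}: since $B\setminus B^-$ is $(b-1)$-degenerate and nonempty (Claim~\ref{nonempty}), there is $y\in B\setminus B^-$ with $d_{B\setminus B^-}(y)\le b(y)-1$, hence $N_{B^-}(y)\neq\emptyset$, and $K_{2,3}$-freeness forces $|N_{B^-}(y)|=1$, say $N_{B^-}(y)=\{v_1\}$. Symmetrically one obtains $x\in A\setminus A^-$ with a unique neighbour in $A^-$. Next, since $(A,B)$ is not $(a,b)$-feasible, one of $A,B$ is $(a-1)$- or $(b-1)$-degenerate; say $A$ is, so some $u\in A^-$ lies in $D_A$. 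A short case split (re-choosing $x$ if necessary) arranges that $u\in D_A$, $ux\in E(G)$, and $d_{A\setminus\{u\}}(x)\le a(x)-1$; then $C_5^+$-freeness gives $xv_1,uy\notin E(G)$.

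The decisive step you are missing is now a \emph{swap}: by Claim~\ref{degree}, the partition $(A_0,B_0):=(A\cup\{v_1\}\setminus\{u\},\,B\cup\{u\}\setminus\{v_1\})$ again lies in $\mathscr{P}$. In this new partition one checks $d_{A_0}(x)=d_{A\setminus\{u\}}(x)\le a(x)-1$ and $d_{B_0}(y)=d_{B\setminus\{v_1\}}(y)\le b(y)-1$, so $x\in A_0^-$ and $y\in B_0^-$. Applying Claim~\ref{complete} to $(A_0,B_0)$ yields $xy\in E(G)$, and the quadrilateral $u\,v_1\,y\,x$ shares exactly the edge $uv_1$ with $Q=u_1v_1u_2v_2$, producing an $L_3$. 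The whole argument is three moves long; no open-ended chase is needed. The moral is that Claim~\ref{degree} is not just a tool for pinning down degrees --- its real power here is to move to a neighbouring optimal partition where Claim~\ref{complete} manufactures the one missing edge.
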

\begin{proof}
Suppose for a contradiction that $A^-=\{u_1,u_2\}$ and $B^-=\{v_1,v_2\}$. Since $G$ is $K_4^-$-free, $u_1u_2,v_1v_2\notin E(G)$ by Claim \ref{complete}. Note that $A\cup B^-$ is not $(a-1)$-meager by Claim \ref{containment}. It follows that $B\setminus B^-$ is $(b-1)$-degenerate as $G$ has no $(a,b)$-feasible pair and $B\setminus B^-\neq\emptyset$ by Claim \ref{nonempty}. Thus there exists $y\in B\setminus B^-$ such that $d_{B\setminus B^-}(y)\leq b(y)-1$, implying $N_{B^-}(y)\neq\emptyset$ as $d_B(y)\geq b(y)+\mu(y)-1\geq b(y)$. By Claim \ref{complete}, $|N_{B^-}(y)|=1$ as $G$ is $K_{2,3}$-free, say $N_{B^-}(y)=\{v_1\}$. By symmetry, $A\setminus A^-$ is $(a-1)$-degenerate and there exists $x_1\in A\setminus A^-$ such that $d_{A\setminus A^-}(x_1)\leq a(x_1)-1$ and $|N_{A^-}(x_1)|=1$, say $N_{A^-}(x_1)=\{u_1\}$.  Clearly, $d_{A\setminus \{u_1\}}(x_1)=d_{A\setminus A^-}(x_1)\leq a(x_1)-1$ and $d_{B\setminus \{v_1\}}(y)=d_{B\setminus B^-}(y)\leq b(y)-1$.

Since $G$ has no $(a,b)$-feasible partition, either $A$ is $(a-1)$-degenerate or $B$ is $(b-1)$-degenerate. We may assume that $A$ is $(a-1)$-degenerate. Thus either  $d_A(u_1)\leq a(u_1)-1$ or $d_A(u_2)\leq a(u_2)-1$. If $d_A(u_1)\leq a(u_1)-1$, then we set $u:=u_1$ and $x:=x_1$. If $d_A(u_1)\geq a(u_1)$, then $d_A(u_2)\leq a(u_2)-1$. Clearly, $A\setminus\{u_2\}$ is $(a-1)$-degenerate. Thus there exists $x_2\in A\setminus\{u_2\}$ such that $d_{A\setminus\{u_2\}}(x_2)\leq a(x_2)-1$. Note that $d_{A\setminus\{u_2\}}(u_1)=d_A(u_1)\geq a(u_1)$ as $u_1u_2\notin E(G)$. Thus $x_2\neq u_1$ and $x_2\in A\setminus A^-$. Note also that $d_A(x_2)\geq a(x_2)+\mu(x_2)-1\geq a(x_2)$. This implies $u_2x_2\in E(G)$. Set $u:=u_2$ and $x:=x_2$. In both cases, we have $ux\in E(G)$, $d_A(u)\leq a(u)-1$ and $d_{A\setminus\{u\}}(x)\leq a(x)-1$. Since $G$ is $C_5^+$-free, we have $xv_1,uy\notin E(G)$. By Claim \ref{degree}, $(A_0,B_0):=(A\cup\{v_1\}\setminus\{u\},B\cup\{u\}\setminus\{v_1\})\in\mathscr{P}$. Observe that $d_{A_0}(x)=d_{A\setminus\{u\}}(x)\leq a(x)-1$ and $d_{B_0}(y)=d_{B\setminus \{v_1\}}(y)\leq b(y)-1$. Thus $x\in A_0^-$ and $y\in B_0^-$, yielding $xy\in E(G)$ by Claim \ref{complete}. It follows that $\{u_1,u_2,v_1,v_2,x,y\}$ contains an $L_3$, a contradiction.
\end{proof}

For any $(A,B)\in\mathscr{P}$, define $A^==\{x\in A\mid d_A(x)= a(x)+\mu(x)-1\}$ and $B^==\{y\in B\mid d_B(y)=b(y)+\mu(y)-1\}$. A path $x u v y$ is called a \emph{special path} with respect to $(A,B)$, if $u\in A^-$, $v\in B^-$, $x\in A^=$ and $y\in B^=$.

\begin{claim}\label{diagonal}
For any special path $x u v y$ with respect to $(A,B)\in\mathscr{P}$, if either $u\in D_A$ or $v\in D_B$, then either $vx\in E(G)$ or $uy\in E(G)$. Moreover, if $vx\in E(G)$, then $N_{A^=}(u)=\{x\}$; if $uy\in E(G)$, then $N_{B^=}(v)=\{y\}$.
\end{claim}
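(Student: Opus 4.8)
The plan is to swap the two interior vertices $u,v$ of the path and then read off degrees in the resulting partition. Assume first that $u\in D_A$; the case $v\in D_B$ is entirely symmetric (one only replaces the data below by $\mu(u,v)=\mu(v)$, $d_B(v)=b(v)-1$, $d_A(u)=a(u)+\mu(u)-2$, which comes from Claim \ref{degree}). By Claim \ref{degree}, $(A',B'):=(A\cup\{v\}\setminus\{u\},B\cup\{u\}\setminus\{v\})\in\mathscr{P}$, and $\mu(u,v)=\mu(u)$, $d_A(u)=a(u)-1$, $d_B(v)=b(v)+\mu(v)-2$. Combining these with \eqref{=} gives $d_B(u)=d_G(u)-d_A(u)=b(u)+2\mu(u)-2$ and $d_A(v)=d_G(v)-d_B(v)=a(v)+\mu(v)-1$; moreover $d_A(x)=a(x)+\mu(x)-1$ and $d_B(y)=b(y)+\mu(y)-1$ because $x\in A^=$ and $y\in B^=$. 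This is all the arithmetic that is needed.

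For the first assertion, suppose for contradiction that $vx\notin E(G)$ and $uy\notin E(G)$, so $\mu(v,x)=\mu(u,y)=0$. Counting degrees in $(A',B')$ yields $d_{B'}(u)=d_B(u)-\mu(u,v)=b(u)+\mu(u)-2$, $d_{A'}(v)=d_A(v)-\mu(u,v)=a(v)+\mu(v)-1-\mu(u)$, $d_{A'}(x)=d_A(x)-\mu(u,x)=a(x)+\mu(x)-1-\mu(u,x)$, and $d_{B'}(y)=d_B(y)-\mu(v,y)=b(y)+\mu(y)-1-\mu(v,y)$. Since $\mu(u)\ge1$, $\mu(u,x)\ge1$ and $\mu(v,y)\ge1$, each of these four quantities is at most its corresponding meager threshold, so $\{x,v\}\subseteq(A')^-$ and $\{u,y\}\subseteq(B')^-$. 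As $x\in A$, $v\in B$ and $u\in A$, $y\in B$, we have $x\ne v$ and $u\ne y$, whence $|(A')^-|\ge2$ and $|(B')^-|\ge2$, contradicting Claim \ref{min} applied to $(A',B')\in\mathscr{P}$. Therefore $vx\in E(G)$ or $uy\in E(G)$.

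For the ``moreover'' part, assume $vx\in E(G)$ and suppose, for contradiction, that $u$ has a neighbor $x'\in A^=$ with $x'\ne x$. If $vx'\in E(G)$, then the four vertices $u,x,x',v$ carry the five edges $ux$, $ux'$, $uv$, $vx$, $vx'$ and hence contain a $K_4^-$, which is impossible; so $vx'\notin E(G)$. But then $x'uvy$ is again a special path with respect to $(A,B)$ — it uses the edge $vy$ of the original path — so the first assertion gives $vx'\in E(G)$ or $uy\in E(G)$, forcing $uy\in E(G)$. Now $x,u,v,y$ carry the five edges $xu$, $uv$, $vy$, $vx$, $uy$ and hence contain a $K_4^-$, a contradiction. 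Thus $N_{A^=}(u)=\{x\}$. The implication ``$uy\in E(G)\Rightarrow N_{B^=}(v)=\{y\}$'' is the mirror image: given a would-be $y'\in B^=$ with $y'\ne y$ and $vy'\in E(G)$, one rules out $uy'\in E(G)$ by a $K_4^-$ on $\{u,v,y,y'\}$, applies the first assertion to the special path $xuvy'$ to deduce $vx\in E(G)$, and then finds a $K_4^-$ on $\{x,u,v,y\}$.

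The only step calling for any idea is the first assertion: rather than exhibiting a forbidden subgraph directly, one observes that if both ``diagonal'' edges $vx$ and $uy$ are absent, then after the swap the partition $(A',B')$ acquires at least two deficient vertices on each side, which Claim \ref{min} forbids. The forbidden-subgraph hypotheses thus enter the proof only through Claims \ref{degree} and \ref{min} already in hand, together with two entirely elementary copies of $K_4^-$ in the ``moreover'' part.
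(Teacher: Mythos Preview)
Your proof is correct and follows essentially the same approach as the paper: swap $u$ and $v$ via Claim~\ref{degree}, compute degrees to show that (under $vx,uy\notin E(G)$) both sides of the new partition have at least two vertices in $(\cdot)^-$, and invoke Claim~\ref{min}; for the ``moreover'' part, replace $x$ by a hypothetical $x'\in N_{A^=}(u)\setminus\{x\}$, apply the first assertion to the new special path, and locate a $K_4^-$ in either case. Your write-up is slightly more explicit about the arithmetic and about distinctness of the four vertices, but the logic is identical to the paper's.
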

\begin{proof}
Suppose that $vx,uy\notin E(G)$. We may assume by symmetry that $u\in D_A$. By Claim \ref{degree}, $(A_1,B_1):=(A\cup\{v\}\setminus\{u\},B\cup\{u\}\setminus\{v\})\in\mathscr{P}$, $\mu(u,v)=\mu(u)$, $d_A(u)=a(u)-1$ and $d_B(v)=b(v)+\mu(v)-2$. This together with $d_{A_1}(v)=d_G(v)-d_B(v)-\mu(u,v)$ and  $d_{B_1}(u)=d_G(u)-d_A(u)-\mu(u,v)$ implies $v\in A_1^-$ and $u\in B_1^-$. Since $x\in A^=$ and $y\in B^=$, we have $d_{A_1}(x)=d_A(x)-\mu(u,x)=a(x)+\mu(x)-1-\mu(u,x)$ and $d_{B_1}(y)=d_B(y)-\mu(v,y)=b(y)+\mu(y)-1-\mu(v,y)$, indicating $x\in A_1^-$ and $y\in B_1^-$. This contradicts Claim \ref{min}.

Suppose that $vx\in E(G)$ and there exists $x'\in N_{A^=}(u)\setminus\{x\}$. Clearly, $x' u v y$ forms another special path with respect to $(A,B)$. It follows that either $uy\in E(G)$ or $vx'\in E(G)$. In both cases, we can find a $K_4^-$, a contradiction. Similarly, if $uy\in E(G)$, then $N_{B^=}(v)=\{y\}$.
\end{proof}

\begin{claim}\label{exchange}
For any $(A,B)\in\mathscr{P}$, let $u\in A^-$ and $v\in B^-$. If $u\in D_A$ and $x\in N_{A^=}(u)$ with $vx\notin E(G)$, then  $(A\cup\{v\}\setminus\{x\},B\cup\{x\}\setminus\{v\})\in\mathscr{P}$; if $v\in D_B$ and $y\in N_{B^=}(v)$ with $uy\notin E(G)$, then  $(A\cup\{y\}\setminus\{u\},B\cup\{u\}\setminus\{y\})\in\mathscr{P}$.
\end{claim}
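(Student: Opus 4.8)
The plan is to establish the first statement; the second is obtained by interchanging the roles of $(A,a,u,x,D_A,A^=)$ and $(B,b,v,y,D_B,B^=)$. So fix $(A,B)\in\mathscr{P}$, $u\in D_A$, $v\in B^-$, and $x\in N_{A^=}(u)$ with $vx\notin E(G)$, and set $(A',B'):=(A\cup\{v\}\setminus\{x\},B\cup\{x\}\setminus\{v\})$. Since $A$ and $B$ are disjoint we have $x\neq v$ and $u\neq v$, while $u\neq x$ because $\mu(u)\geq1$ forces $d_A(u)=a(u)-1<a(u)+\mu(u)-1=d_A(x)$; hence $(A',B')$ is a genuine partition of $V(G)$. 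I would then prove two things: (i) $\omega(A',B')=\omega(A,B)$, and (ii) $(A',B')$ is $(a-1,b-1)$-meager. Since $(A,B)\in\mathscr{P}$, these together give $(A',B')\in\mathscr{P}$.

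For (i) I would first record the relevant degrees. Because $u\in D_A\subseteq A^-$ and $v\in B^-$, Claim~\ref{degree} yields $\mu(u,v)=\mu(u)$, $d_A(u)=a(u)-1$ and $d_B(v)=b(v)+\mu(v)-2$, so \eqref{=} gives $d_A(v)=d_G(v)-d_B(v)=a(v)+\mu(v)-1$. Since $x\in A^=$ we have $d_A(x)=a(x)+\mu(x)-1$, so \eqref{=} gives $d_B(x)=d_G(x)-d_A(x)=b(x)+\mu(x)-2$. Applying \eqref{weight3} to the exchange of $x\in A$ with $v\in B$ and using $\mu(x,v)=0$, the right-hand side becomes $\bigl(d_B(x)-d_A(x)+a(x)-b(x)\bigr)+\bigl(d_A(v)-d_B(v)+b(v)-a(v)\bigr)$; substituting the four values, the first bracket equals $-1$ and the second equals $+1$, whence $\omega(A',B')-\omega(A,B)=0$.

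For (ii), to see that $A'$ is $(a-1)$-meager, suppose some nonempty $A''\subseteq A'$ were $a$-nice. Then $A''$ is an $a$-nice subset of $A\cup\{v\}$, so $u\in A''$ by Claim~\ref{containment}, and therefore $a(u)+\mu(u)-1\leq d_{A''}(u)\leq d_{A\cup\{v\}}(u)=d_A(u)+\mu(u,v)=a(u)+\mu(u)-1$. Equality means every edge at $u$ inside $A\cup\{v\}$ already lies inside $A''$, so $N_{A\cup\{v\}}(u)\subseteq A''$; but $x\in N_A(u)\subseteq N_{A\cup\{v\}}(u)$, forcing $x\in A''$, which contradicts $A''\subseteq A'$. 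For $B'$, suppose some nonempty $B''\subseteq B'$ were $b$-nice. If $x\notin B''$ then $B''\subseteq B\setminus\{v\}\subseteq B$, contradicting that $B$ is $(b-1)$-meager; if $x\in B''$ then $b(x)+\mu(x)-1\leq d_{B''}(x)\leq d_{B\cup\{x\}}(x)=d_B(x)=b(x)+\mu(x)-2$, again a contradiction. Hence $B'$ is $(b-1)$-meager, and $(A',B')\in\mathscr{P}$ as claimed.

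I do not expect a genuine obstacle: the statement is essentially bookkeeping on top of Claims~\ref{containment} and~\ref{degree}. The only delicate points are that the weight stays maximal --- which works precisely because $u\in D_A$ pins down $d_A(u)$ and $\mu(u,v)$, and $x\in A^=$ pins down $d_B(x)$ --- and the ``trapping'' step, where the equality $d_{A''}(u)=d_{A\cup\{v\}}(u)$ with $A''\subseteq A\cup\{v\}$ has to be read edge-by-edge (with multiplicities) in order to conclude $N_{A\cup\{v\}}(u)\subseteq A''$ and thereby recover $x\in A''$.
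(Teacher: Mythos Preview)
Your proof is correct and follows essentially the same approach as the paper: verify the weight is unchanged via \eqref{weight3} together with the degree values supplied by Claim~\ref{degree} and $x\in A^=$, and rule out $a$- and $b$-nice subsets of the new parts by a short degree-counting contradiction. The only cosmetic difference is in the $A'$-side: you invoke Claim~\ref{containment} to place $u$ in the hypothetical $a$-nice set and then use the equality $d_{A''}(u)=d_{A\cup\{v\}}(u)$ to force $x\in A''$, whereas the paper first places $v$, uses equality at $v$ to pull in $u$, and then gets a direct degree deficit at $u$ from the removal of $x$; both arguments are equivalent.
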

\begin{proof}
Assume that $u\in D_A$ and $x\in N_{A^=}(u)$ with $vx\notin E(G)$.
We first show that $B\cup\{x\}\setminus\{v\}$ is $(b-1)$-meager. If not, then there is a $b$-nice subset $B'\subseteq B\cup\{x\}\setminus\{v\}$. This implies that $x\in B'$ as $B$ is $(b-1)$-meager. Since $vx\notin E(G)$ and $x\in A^=$, $d_{B'}(x)\leq d_{B\cup\{x\}\setminus\{v\}}(x)=d_B(x)=d_G(x)-d_A(x)=b(x)+\mu(x)-2$, contradicting with $x\in B'$. Now, we prove that $A\cup\{v\}\setminus\{x\}$ is $(a-1)$-meager. Otherwise, there is an $a$-nice subset $A'\subseteq A\cup\{v\}\setminus\{x\}$. Since $A$ is $(a-1)$-meager, we have $v\in A'$ and $d_{A'}(v)\geq a(v)+\mu(v)-1$. Note that $d_B(v)=b(v)+\mu(v)-2$ by Claim \ref{degree} as $u\in D_A$. It follows that $d_{A'}(v)\leq d_{A\cup\{v\}\setminus\{x\}}(v)=d_A(v)=a(v)+\mu(v)-1$ as $vx\notin E(G)$. Thus, $d_{A'}(v)=d_A(v)$, implying $u\in A'$ as $uv\in E(G)$. The fact $d_{A'}(u)\leq d_{A\cup\{v\}\setminus\{x\}}(u)=d_A(u)+\mu(u,v)-\mu(u,x)\leq a(u)+\mu(u)-2$ also indicates that $u\notin A'$, a contradiction. Therefore, $(A\cup\{v\}\setminus\{x\},B\cup\{x\}\setminus\{v\})$ is an $(a-1,b-1)$-meager partition. With simple calculations, we have $\omega((A\cup\{v\}\setminus\{x\},B\cup\{x\}\setminus\{v\}))=\omega(A,B)$ in view of \eqref{weight3} and \eqref{=}.
Thus $(A\cup\{v\}\setminus\{x\},B\cup\{x\}\setminus\{v\})\in\mathscr{P}$. Similarly, if $v\in D_B$ and $y\in N_{B^=}(v)$ with $uy\notin E(G)$, then  $(A\cup\{y\}\setminus\{u\},B\cup\{u\}\setminus\{y\})\in\mathscr{P}$.
\end{proof}


Fix a partition $(A,B)\in\mathscr{P}$. By Claim \ref{min}, we may assume by symmetry that
\[
A^-=\{u\}\;\, \text{and}\;\, |B^-|\geq|A^-|.
\]
By Claim \ref{containment}, $B\cup\{u\}$ is not $(b-1)$-meager. Since $G$ has no $(a,b)$-feasible pair, $A\setminus\{u\}$ is $(a-1)$-degenerate, implying that there exists $x_1\in A\setminus\{u\}$ such that $d_{A\setminus\{u\}}(x_1)\leq a(x_1)-1$. Note that $d_A(x_1)\geq a(x_1)+\mu(x_1)-1$ as $x_1\in A\setminus A^-$ and $d_{A\setminus\{u\}}(x_1)=d_A(x_1)-\mu(u,x_1)$. It follows that $\mu(u,x_1)=\mu(x_1)$, $d_{A\setminus\{u\}}(x_1)=a(x_1)-1$ and $d_A(x_1)=a(x_1)+\mu(x_1)-1$. Hence, 
\[
x_1\in N_{A^=}(u).
\]

Recall that either $A$ is $(a-1)$-degenerate or $B$ is $(b-1)$-degenerate. It follows that either $D_A\neq\emptyset$ or $D_B\neq\emptyset$. In what follows, we may assume that
\begin{eqnarray}\label{Nonempty}
D_B\neq\emptyset.
\end{eqnarray}
Otherwise, let $D_B=\emptyset$. Clearly, $B$ is $b$-feasible and $A$ is $(a-1)$-degenerate. Thus $D_A=\{u\}$.
If $|B^-|=1$, then the case can be reduced to \eqref{Nonempty} by symmetry as $D_A\neq\emptyset$.
Suppose that $|B^-|\geq2$ and $v_1,v_2\in B^-$.
Since $G$ is $K_4^-$-free, either $x_1v_1\notin E(G)$ or $x_1v_2\notin E(G)$ by Claim \ref{complete}. By symmetry, assume that $x_1v_1\notin E(G)$.
Clearly, $(A_2,B_2):=(A\cup\{v_1\}\setminus\{u\},B\cup\{u\}\setminus\{v_1\})\in\mathscr{P}$, $\mu(u,v)=\mu(u)$ and $d_B(v)=b(v)+\mu(v)-2$ for each $v\in B^-$ by Claim \ref{degree}.
It is easy to check that $v_1\in A_2^-$, $x_1\in D_{A_2}\subseteq A_2^-$ and $u\in B_2^-$. Thus $B_2^-=\{u\}$ by Claim \ref{min}. Again, this can be reduced to \eqref{Nonempty} as $|B_2^-|=1$ and $D_{A_2}\neq\emptyset$ .

For each $v\in D_B$ and the fixed vertex $x_1$, let $A_v=A\cup\{v\}\setminus\{x_1\}$ and $B_v=B\cup\{x_1\}\setminus\{v\}$.
\begin{claim}\label{exchange-x1v}
For each $v\in D_B$, if $x_1v\notin E(G)$, then $(\mathrm{i})$ $\mu(v)=1$; $(\mathrm{ii})$ $(A_v, B_v)\in\mathscr{P}$, $u\in A_v^-$, $v\in A_v^=$ and $x_1\in B_v^-$.
\end{claim}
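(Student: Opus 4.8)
The plan is to prove part~(i) first; once $\mu(v)=1$ is established, part~(ii) becomes a routine variant of the exchange argument behind Claim~\ref{exchange}.

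For part~(i), I would start from the data recorded just above the statement: $A^-=\{u\}$, $x_1\in N_{A^=}(u)$ with $d_A(x_1)=a(x_1)+\mu(x_1)-1$ and $\mu(u,x_1)=\mu(x_1)$, and $v\in D_B\subseteq B^-$. First apply Claim~\ref{degree} to $(A,B)$ with the pair $u,v$; since $v\in D_B$, this yields an auxiliary partition $(A_1,B_1):=(A\cup\{v\}\setminus\{u\},B\cup\{u\}\setminus\{v\})\in\mathscr{P}$ with $\mu(u,v)=\mu(v)$, $d_B(v)=b(v)-1$ and $d_A(u)=a(u)+\mu(u)-2$. Combining these identities with the recorded data on $x_1$, the hypothesis $x_1v\notin E(G)$ and \eqref{=}, a short computation gives $d_{A_1}(x_1)=a(x_1)-1$ (so that $x_1\in D_{A_1}$) and $d_{B_1}(u)=b(u)+\mu(u)-1-\mu(v)$ (so that $u\in B_1^-$). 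Then I would apply Claim~\ref{degree} a \emph{second} time, now to $(A_1,B_1)$ with the pair $x_1,u$: since $x_1\in D_{A_1}$, its ``moreover'' part forces $d_{B_1}(u)=b(u)+\mu(u)-2$. Comparing the two expressions for $d_{B_1}(u)$ gives $\mu(v)=1$, which is part~(i). I expect this double application of Claim~\ref{degree} to be the main obstacle: one has to notice that after the first swap $x_1$ drops into $D_{A_1}$, which over-determines $d_{B_1}(u)$ and thereby collapses $\mu(v)$ to $1$. Everything after this point is bookkeeping with \eqref{=}, \eqref{weight3} and the meagerness arguments already rehearsed in Claims~\ref{containment}--\ref{exchange}.

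For part~(ii), with $\mu(v)=1$ in hand, \eqref{=} together with $d_B(v)=b(v)-1$ gives $d_A(v)=a(v)$, and $d_A(x_1)=a(x_1)+\mu(x_1)-1$ gives $d_B(x_1)=b(x_1)+\mu(x_1)-2$. Next I would verify that $(A_v,B_v)$ is an $(a-1,b-1)$-meager partition by mimicking the proof of Claim~\ref{exchange} with the two moved vertices interchanged: a hypothetical $b$-nice subset of $B_v$ would have to contain $x_1$ (as $B$ is $(b-1)$-meager), contradicting $d_{B_v}(x_1)=b(x_1)+\mu(x_1)-2$; and a hypothetical $a$-nice subset $A'$ of $A_v$ would have to contain $v$ (as $A$ is $(a-1)$-meager), whence $d_{A'}(v)=d_{A_v}(v)=a(v)$ forces $u\in A'$, contradicting $d_{A_v}(u)=a(u)+\mu(u)-1-\mu(x_1)\le a(u)+\mu(u)-2$. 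Plugging the relevant degrees into \eqref{weight3} and using \eqref{=} gives $\omega(A_v,B_v)-\omega(A,B)=2\mu(v)-2=0$, so the maximality of $\omega$ over $\mathscr{P}$ forces $(A_v,B_v)\in\mathscr{P}$. Finally, $u\in A_v^-$, $v\in A_v^=$ and $x_1\in B_v^-$ are read off directly from the degrees $d_{A_v}(u)=a(u)+\mu(u)-1-\mu(x_1)$, $d_{A_v}(v)=a(v)$ and $d_{B_v}(x_1)=b(x_1)+\mu(x_1)-2$.
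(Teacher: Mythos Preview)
Your argument is correct and follows essentially the same route as the paper. Part~(i) proceeds, as you do, by swapping $u$ and $v$ via Claim~\ref{degree} to obtain $(A\cup\{v\}\setminus\{u\},B\cup\{u\}\setminus\{v\})\in\mathscr{P}$ (the paper calls this $(A_3,B_3)$), observing that $x_1$ lands in $D_{A_3}$ and $u$ in $B_3^-$, and then re-applying Claim~\ref{degree} to force $\mu(u,v)=1$; part~(ii) is the same meagerness-and-weight verification you outline, with the identical degree identities $d_{A_v}(u)=a(u)+\mu(u)-1-\mu(u,x_1)$, $d_{A_v}(v)=a(v)$, $d_{B_v}(x_1)=b(x_1)+\mu(x_1)-2$ and the weight difference computed from~\eqref{weight3} vanishing.
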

\begin{proof}
$(\mathrm{i})$ By Claim \ref{degree}, $(A_3,B_3):=(A\cup\{v\}\setminus\{u\},B\cup\{u\}\setminus\{v\})\in\mathscr{P}$, $\mu(v)=\mu(u,v)$ and $d_A(u)=a(u)+\mu(u)-2$ as $v\in D_B$.
Recall that $d_{A\setminus\{u\}}(x_1)=a(x_1)-1$. Thus $d_{A_3}(x_1)=d_{A\setminus\{u\}}(x_1)=a(x_1)-1$ as $x_1v\notin E(G)$, yielding $x_1\in D_{A_3}$. Note that $d_{B_3}(u)=d_G(u)-d_A(u)-\mu(u,v)=b(u)+\mu(u)-1-\mu(u,v)$. This implies $u\in B_3^-$ as $\mu(u,v)\geq1$. Applying Claim \ref{degree} with $(A_3,B_3)\in\mathscr{P}$, $x_1\in D_{A_3}$ and $u\in B_3^-$, we have $d_{B_3}(u)=b(u)+\mu(u)-2$. It follows that $\mu(u,v)=1$, implying $\mu(v)=1$.

$(\mathrm{ii})$ Recall that $d_A(u)=a(u)+\mu(u)-2$ and $\mu(u,v)=\mu(v)=1$. Since $v\in D_B$ and $x_1\in A^=$, we have $d_{A_v}(u)=d_A(u)+\mu(u,v)-\mu(u,x_1)=a(u)+\mu(u)-1-\mu(u,x_1)$, $d_{A_v}(v)=d_G(v)-d_B(v)=a(v)$ and $d_{B_v}(x_1)=d_G(x_1)-d_A(x_1)=b(x_1)+\mu(x_1)-2$. Now, we show that $B_v$ is $(b-1)$-meager. If not, then there exists a $b$-nice subset $B'\subseteq B_v$. Since $B$ is $(b-1)$-meager, we have $x_1\in B'$ and $d_{B_v}(x_1)\geq d_{B'}(x_1)\geq b(x_1)+\mu(x_1)-1$, a contradiction. Next, we prove that $A_v$ is $(a-1)$-meager. Otherwise, there is an $a$-nice subset $A'\subseteq A_v$. Since $A$ is $(a-1)$-meager, we have $v\in A'$ and $d_{A_v}(v)\geq d_{A'}(v)\geq a(v)+\mu(v)-1=a(v)$. This implies that $d_{A_v}(v)=d_{A'}(v)$. Thus $u\in A'$ as $uv\in E(G)$. It follows that $d_{A_v}(u)\geq d_{A'}(u)\geq a(u)+\mu(u)-1$, a contradiction. Therefore, $(A_v,B_v)$ is an $(a-1,b-1)$-meager partition. Simple calculations together with \eqref{weight3} and \eqref{=} show that $\omega(A_v,B_v)=\omega(A,B)$, implying $(A_v,B_v)\in\mathscr{P}$. Moreover, $u\in A_v^-$, $v\in A_v^=$ and $x_1\in B_v^-$ by noting that $\mu(u,x_1)\ge1$ and $\mu(v)=1$.
\end{proof}



Now, we conclude that $D_B$ is an independent set. Otherwise, there is an edge $vv'$ contained in $G[D_B]$. Since $G$ is $K_4^-$-free, we have $x_1v,x_1v'\notin E(G)$. By Claim \ref{exchange-x1v}, $\mu(v)=1$ and $(A_v, B_v)\in\mathscr{P}$. It follows that $d_{B_v}(v')=d_{B}(v')-\mu(v,v')=b(v')-2$, contradicting Claim \ref{degree}. 

Note that $B\setminus D_B$ is $(b-1)$-degenerate by Claim \ref{containment} as $B\setminus D_B\neq\emptyset$ by Claim \ref{nonempty}. Thus, there exists $y\in B\setminus D_B$ such that $d_{B\setminus D_B}(y)\leq b(y)-1$.

\begin{claim}\label{y}
For each $y\in B\setminus D_B$ satisfying $d_{B\setminus D_B}(y)\leq b(y)-1$, we have $|N_{D_B}(y)|=1$.
\end{claim}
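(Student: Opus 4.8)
The plan for this claim is as follows. The bound $|N_{D_B}(y)|\ge 1$ is immediate: since $y\in B\setminus D_B$ we have $d_B(y)\ge b(y)$ by the characterization of $D_B$ recorded right after Claim~\ref{degree}, whereas $d_{B\setminus D_B}(y)\le b(y)-1$, so $y$ must have a neighbor in $D_B$. To get $|N_{D_B}(y)|\le 1$, I would assume for contradiction that $k:=|N_{D_B}(y)|\ge 2$ and write $N_{D_B}(y)=\{v_1,\dots,v_k\}$. Because $D_B\subseteq B^-$ and $A^-=\{u\}$, Claim~\ref{complete} gives $uv_i\in E(G)$ for all $i$, while $v_iv_j\notin E(G)$ for $i\neq j$ by the independence of $D_B$; thus $uv_iyv_j$ is a quadrilateral whenever $i\neq j$.

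Using that $G$ has no quadrilateral sharing an edge with a triangle or with another quadrilateral, I would first read off the relevant non-adjacencies. We get $u\not\sim y$ (else the triangle $uv_1y$ shares the edge $uv_1$ with the quadrilateral $uv_1yv_2$); for every $i$ we get $x_1\not\sim v_i$ (else, since $x_1\in N_{A^=}(u)$, the triangle $ux_1v_i$ shares the edge $uv_i$ with $uv_iyv_j$ for any $j\neq i$); and $x_1\not\sim y$ (else the quadrilateral $x_1uv_1y$ shares the edge $uv_1$ with $uv_1yv_2$). Now $x_1v_i\notin E(G)$ and Claim~\ref{exchange-x1v}(i) give $\mu(v_i)=1$ for each $i$, hence $\mu(y,v_i)=1$ and $d_B(y)=d_{B\setminus D_B}(y)+k\le b(y)+k-1$. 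Also $u\not\sim y$ together with Claim~\ref{complete} forces $y\notin B^-$, i.e., $d_B(y)\ge b(y)+\mu(y)-1$.

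The core is an induction on $k$, taken over all $(A,B)\in\mathscr{P}$ with $|A^-|=1$ and all $y$ as in the claim. If $d_B(y)=b(y)+\mu(y)-1$, then $y\in B^=$, so $x_1uv_1y$ is a special path with respect to $(A,B)$ with $v_1\in D_B$, and Claim~\ref{diagonal} forces $x_1v_1\in E(G)$ or $uy\in E(G)$ --- both excluded. So $d_B(y)\ge b(y)+\mu(y)$, and I apply Claim~\ref{exchange-x1v}(ii) to $v_1$ (legitimate since $v_1\in D_B$ and $x_1v_1\notin E(G)$), obtaining $(A_{v_1},B_{v_1})\in\mathscr{P}$ with $u\in A_{v_1}^-$, $v_1\in A_{v_1}^=$, $x_1\in B_{v_1}^-$. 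From $v_1v_2,x_1v_2\notin E(G)$ we get $d_{B_{v_1}}(v_2)=b(v_2)-1$, so $v_2\in D_{B_{v_1}}$, $|B_{v_1}^-|\ge 2$, and Claim~\ref{min} yields $A_{v_1}^-=\{u\}$. Since $x_1\not\sim y$ and $\mu(v_1)=1$, $d_{B_{v_1}}(y)=d_B(y)-1\ge b(y)$, so $y\in B_{v_1}\setminus D_{B_{v_1}}$; and since no vertex can be a common neighbor of $v_1$ and $y$ (it would place a triangle on the edge $v_1y$ of $uv_1yv_2$), the neighbors of $y$ lying in $D_{B_{v_1}}$ are exactly $v_2,\dots,v_k$, while $d_{B_{v_1}\setminus D_{B_{v_1}}}(y)=d_{B\setminus D_B}(y)\le b(y)-1$. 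Hence $y$ satisfies the claim's hypotheses for $(A_{v_1},B_{v_1})$ but now with $k-1$ neighbors in $D_{B_{v_1}}$. If $k-1\ge 2$ this contradicts the induction hypothesis; if $k-1=1$ then $k=2$, so $d_B(y)\le b(y)+1$, which together with $d_B(y)\ge b(y)+\mu(y)$ forces $\mu(y)=1$ and $d_B(y)=b(y)+1$, hence $d_{B_{v_1}}(y)=b(y)=b(y)+\mu(y)-1$, i.e., $y\in B_{v_1}^=$; then $v_1uv_2y$ is a special path with respect to $(A_{v_1},B_{v_1})$ (with $v_1$ playing the role of $x_1$, as $v_1\in N_{A_{v_1}^=}(u)$) and $v_2\in D_{B_{v_1}}$, so Claim~\ref{diagonal} again gives $v_1v_2\in E(G)$ or $uy\in E(G)$, which is impossible.

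I expect the main obstacle to be the bookkeeping in the inductive passage to $(A_{v_1},B_{v_1})$: one must verify that this partition again meets every structural fact derived so far (in particular that its analogue of $A^-$ is a singleton and that $v_1$ legitimately takes over the role of $x_1$), and that the exchange is exactly degree-neutral for $y$ relative to $D$ --- it creates no new $D$-neighbor of $y$ and preserves the degeneracy bound $d_{B\setminus D_B}(y)\le b(y)-1$. This is precisely where the forbidden configuration ``a quadrilateral sharing an edge with a triangle or quadrilateral'' and the equalities $\mu(v_i)=1$ are doing the work.
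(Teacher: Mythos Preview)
Your argument is correct and its core --- passing to $(A_{v_1},B_{v_1})$ and applying Claim~\ref{diagonal} to the special path $v_1uv_2y$ --- matches the paper's. The paper is more direct, though: it uses $K_{2,3}$-freeness (with $u$ adjacent to all of $D_B$ by Claim~\ref{complete}) to get $|N_{D_B}(y)|\le 2$ at once, so no induction is needed. It also skips your separate treatment of $y\in B^=$ in $(A,B)$ and the computation $\mu(v_i)=1$: in $(A_{v_1},B_{v_1})$ one has $d_{B_{v_1}}(y)=d_B(y)-\mu(v_1,y)\le b(y)-1+\mu(v_2,y)\le b(y)+\mu(y)-1$ directly, hence $y\in B_{v_1}^-\cup B_{v_1}^=$; the first case forces $uy\in E(G)$ via Claim~\ref{complete}, the second is your special-path step, and either way $\{u,v_1,v_2,y\}$ contains a $K_4^-$. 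As a side remark, your inductive step has a small (and, once $k\le 2$ is known, harmless) gap: to conclude $N_{D_{B_{v_1}}}(y)=\{v_2,\dots,v_k\}$ you need that any new member $w\neq x_1$ of $D_{B_{v_1}}$ must have lost degree in the swap and hence satisfy $w\sim v_1$, before your triangle-on-$v_1y$ argument applies.
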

\begin{proof}
Note that $d_B(y)=d_{B\setminus D_B}(y)+d_{D_B}(y)\geq b(y)$ as $y\in B\setminus D_B$. It follows that $d_{D_B}(y)\geq 1$. This together with Claim \ref{complete} yields that $1\leq|N_{D_B}(y)|\leq2$ as $G$ is $K_{2,3}$-free. Suppose that $N_{D_B}(y)=\{v_1,v_2\}$ and $v_1v_2\notin E(G)$ as $D_B$ is independent. Clearly,
$d_B(y)=d_{B\setminus D_B}(y)+d_{D_B}(y)\leq b(y)-1+\mu(v_1,y)+\mu(v_2,y).$
Since $G$ is $\{C_5^+,K_{2,3}\}$-free, $x_1v_1,x_1v_2,x_1y\notin E(G)$. By Claim \ref{exchange-x1v}, $(A_{v_1},B_{v_1})\in\mathscr{P}$, $u\in A_{v_1}^-$ and $v_1\in A_{v_1}^=$. Note also that $v_2\in D_{B_{v_1}}$ as $d_{B_{v_1}}(v_2)=d_B(v_2)=b(v_2)-1$. Since $d_{B_{v_1}}(y)=d_B(y)-\mu(v_1,y)\leq b(y)-1+\mu(v_2,y)\leq b(y)+\mu(y)-1$, we have either $y\in B_{v_1}^-$ or $y\in B_{v_1}^=$. If $y\in B_{v_1}^-$, then $uy\in E(G)$ by Claim \ref{complete}; if $y\in B_{v_1}^=$, then $v_1 u v_2 y$ forms a special path with respect to $(A_{v_1},B_{v_1})$, indicating that either $uy\in E(G)$ or $v_1v_2\in E(G)$ by Claim \ref{diagonal}. In both cases, $\{u,v_1,v_2,y\}$ contains a $K_4^-$, a contradiction.
\end{proof}

By Claim \ref{y}, we can fix such a vertex $y\in B\setminus D_B$ and assume that
\[
N_{D_B}(y)=\{v_1\}
\]
for some vertex $v_1\in D_B$. It follows that $d_B(y)=d_{B\setminus D_B}(y)+d_{D_B}(y)\leq b(y)-1+\mu(v_1,y)\leq b(y)+\mu(y)-1,$ thus either $y\in B^-\setminus D_B$ or $y\in B^=$. If $y\in B^-\setminus D_B$, then $uy\in E(G)$ by Claim \ref{complete}. If $y\in B^=$, then $x_1 u v_1 y$ forms a special path with respect to $(A,B)$. Since $v_1\in D_B$, we have either $x_1v_1\in E(G)$ or $uy\in E(G)$ by Claim \ref{diagonal}. Hence, we conclude
\begin{eqnarray}\label{x1v1uy}
\text{either} \; x_1v_1\in E(G) \; \text{or} \; uy\in E(G).
\end{eqnarray}

\begin{claim}\label{uy}
If $uy\in E(G)$, then $\mu(x_1)=1$; if $x_1v_1\in E(G)$, then $y\in B^=$, $\mu(v_1,y)=\mu(y)=1$, $d_B(y)=b(y)$ and $d_{B\setminus D_B}(y)=b(y)-1$.
\end{claim}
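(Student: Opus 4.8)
The statement consists of two implications which, by \eqref{x1v1uy}, cover complementary cases, so the plan is to record a few common facts and then treat $x_1v_1\in E(G)$ and $uy\in E(G)$ separately; in each case I would pass to a well-chosen partition in $\mathscr{P}$ via one of the exchange claims (Claim \ref{exchange} or Claim \ref{exchange-x1v}) and read off the needed multiplicity equation from Claim \ref{degree}. For the preliminaries: the edges $x_1u$, $uv_1$, $v_1y$ all lie in $G$ (as $x_1\in N_{A^=}(u)$, as $u\in A^-$ and $v_1\in B^-$ are adjacent by Claim \ref{complete}, and as $v_1\in N_{D_B}(y)$), so since $G$ is $K_4^-$-free at most one further edge can occur among $x_1,u,v_1,y$; hence $x_1y\notin E(G)$ and exactly one of $x_1v_1\in E(G)$, $uy\in E(G)$ holds. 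Applying Claim \ref{degree} to $(A,B)$ with $u\in A^-$ and $v_1\in D_B$ also records $d_A(u)=a(u)+\mu(u)-2$, $\mu(u,v_1)=\mu(v_1)$ and $d_B(v_1)=b(v_1)-1$.

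In the case $x_1v_1\in E(G)$ we get $uy\notin E(G)$, so the dichotomy that $y\in B^-\setminus D_B$ or $y\in B^=$ (established just before the claim), together with Claim \ref{complete}, forces $y\in B^=$; comparing $d_B(y)=b(y)+\mu(y)-1$ with $d_B(y)=d_{B\setminus D_B}(y)+\mu(v_1,y)\le b(y)-1+\mu(v_1,y)$ then yields $\mu(v_1,y)=\mu(y)$ and $d_{B\setminus D_B}(y)=b(y)-1$, so it remains only to prove $\mu(y)=1$, whence $d_B(y)=b(y)$. For this I would apply Claim \ref{exchange} with $v_1\in D_B$ and $y\in N_{B^=}(v_1)$ (valid since $uy\notin E(G)$) to obtain $(A',B'):=(A\cup\{y\}\setminus\{u\},B\cup\{u\}\setminus\{y\})\in\mathscr{P}$; then, using \eqref{=} together with $uy,x_1y\notin E(G)$ and $d_{A\setminus\{u\}}(x_1)=a(x_1)-1$, one computes $d_{A'}(y)=a(y)+\mu(y)-2$ and $d_{A'}(x_1)=a(x_1)-1$, so $x_1,y\in(A')^{-}$ with $x_1\in D_{A'}$, hence $|(B')^{-}|=1$ by Claim \ref{min}. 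Since $d_{B'}(v_1)=b(v_1)+\mu(v_1)-1-\mu(y)\le b(v_1)+\mu(v_1)-2$, we get $(B')^{-}=\{v_1\}$, and Claim \ref{degree} applied to $(A',B')$ with $x_1\in D_{A'}$ and $v_1\in(B')^{-}$ forces $d_{B'}(v_1)=b(v_1)+\mu(v_1)-2$; comparing the two expressions for $d_{B'}(v_1)$ gives $\mu(y)=1$.

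In the case $uy\in E(G)$ we get $x_1v_1\notin E(G)$, so Claim \ref{exchange-x1v} applies with $v=v_1$, giving $\mu(v_1)=1$ and $(A_{v_1},B_{v_1})\in\mathscr{P}$ with $u\in A_{v_1}^-$, $v_1\in A_{v_1}^=$ and $x_1\in B_{v_1}^-$. Using $x_1v_1,x_1y\notin E(G)$, $N_{D_B}(y)=\{v_1\}$, $\mu(u,v_1)=\mu(v_1)=1$ and $d_A(u)=a(u)+\mu(u)-2$, one computes $d_{B_{v_1}}(x_1)=b(x_1)+\mu(x_1)-2$, $d_{B_{v_1}}(y)=d_{B\setminus D_B}(y)\le b(y)-1$ and $d_{A_{v_1}}(u)=a(u)+\mu(u)-1-\mu(x_1)$. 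In particular $y\in B_{v_1}^-$, so $|B_{v_1}^-|\ge 2$ and hence $A_{v_1}^-=\{u\}$ by Claim \ref{min}. Since $G$ has no $(a,b)$-feasible partition, $D_{A_{v_1}}\ne\emptyset$ or $D_{B_{v_1}}\ne\emptyset$. If $D_{B_{v_1}}=\emptyset$, then $D_{A_{v_1}}=\{u\}$ and Claim \ref{degree} applied to $(A_{v_1},B_{v_1})$ with $u\in D_{A_{v_1}}$ and $y\in B_{v_1}^-$ gives $d_{B_{v_1}}(y)=b(y)+\mu(y)-2$, which with $d_{B_{v_1}}(y)\le b(y)-1$ forces $\mu(y)=1$ and $y\in D_{B_{v_1}}$, a contradiction. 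Hence $D_{B_{v_1}}\ne\emptyset$, and for any $w\in D_{B_{v_1}}$, Claim \ref{degree} applied to $(A_{v_1},B_{v_1})$ with $u\in A_{v_1}^-$ and $w\in D_{B_{v_1}}$ gives $d_{A_{v_1}}(u)=a(u)+\mu(u)-2$, which forces $\mu(x_1)=1$.

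The individual degree computations are routine; the real difficulty is choosing, in each case, an exchange that lands on a partition in $\mathscr{P}$ carrying a $D$-vertex whose degree has already been pinned down exactly, so that Claim \ref{degree} collapses to the desired multiplicity equation. The case $uy\in E(G)$ is the more delicate one: after exchanging $x_1$ and $v_1$ no $D$-vertex of $B_{v_1}$ is immediately visible, and one must first use the auxiliary vertex $y$ to rule out $D_{B_{v_1}}=\emptyset$. One also has to keep careful track of which of $\mu(u,x_1)$, $\mu(u,v_1)$, $\mu(v_1,y)$ equals which vertex-weight, all of which are consequences of earlier claims but are easy to confuse.
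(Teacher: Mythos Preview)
Your proof is correct and follows essentially the same route as the paper: Claim \ref{exchange} for the case $x_1v_1\in E(G)$ and Claim \ref{exchange-x1v} for the case $uy\in E(G)$, with Claim \ref{degree} supplying the multiplicity equation in each. The only redundancy is in the $uy\in E(G)$ case: your own computation $d_{B_{v_1}}(y)=d_{B\setminus D_B}(y)\le b(y)-1$ already shows $y\in D_{B_{v_1}}$, so the case split on whether $D_{B_{v_1}}=\emptyset$ is unnecessary and you can apply Claim \ref{degree} directly with $u\in A_{v_1}^-$ and $y\in D_{B_{v_1}}$, exactly as the paper does.
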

\begin{proof}
If $uy\in E(G)$, then $x_1v_1, x_1y\notin E(G)$ as $G$ is $K_4^-$-free. By Claim \ref{exchange-x1v}, $(A_{v_1},B_{v_1})\in\mathscr{P}$, $u\in A_{v_1}^-$ and $d_{A_{v_1}}(u)=a(u)+\mu(u)-1-\mu(u,x_1)$. Note that $y\in D_{B_{v_1}}$ as $d_{B_{v_1}}(y)=d_{B\setminus D_B}(y)\leq b(y)-1$. It follows that $d_{A_{v_1}}(u)=a(u)+\mu(u)-2$ by Claim \ref{degree}, implying $\mu(u,x_1)=1$. The desired result follows by noting that $\mu(x_1)=\mu(u,x_1)$.

If $x_1v_1\in E(G)$, then $uy,x_1y\notin E(G)$ as $G$ is $K_4^-$-free. Clearly, $y\in B^=$, $\mu(y)=\mu(v_1,y)$ and $d_{B\setminus D_B}(y)= b(y)-1$. By Claim \ref{exchange}, $(A_4,B_4):=(A\cup\{y\}\setminus\{u\},B\cup\{u\}\setminus\{y\})\in\mathscr{P}$.
Note that $d_{A_4}(x_1)=d_{A\setminus\{u\}}(x_1)=a(x_1)-1$
and $d_{B_4}(v_1)=d_B(v_1)+\mu(u,v_1)-\mu(v_1,y)\leq b(v_1)+\mu(v_1)-2$.
Thus, $x_1\in D_{A_4}$ and $v_1\in B_4^-$.
By Claim \ref{degree}, $d_{B_4}(v_1)=b(v_1)+\mu(v_1)-2$, indicating $\mu(v_1,y)=1$. Thus $\mu(y)=\mu(v_1,y)=1$, $d_B(y)=b(y)$ and $d_{B\setminus D_B}(y)= b(y)-1$.
\end{proof}

Now, we may further assume that
\begin{eqnarray}\label{DB}
|D_B|\geq2.
\end{eqnarray}
Otherwise, $D_B=\{v_1\}$ as $v_1\in D_B$. If $uy\in E(G)$, then $u\in A_{v_1}^-$ and $x_1,y\in D_{B_{v_1}}$ by Claim \ref{exchange-x1v} and the proof of Claim \ref{uy}. Thus, $A_{v_1}^-=\{u\}$ by Claim \ref{min} and $|D_{B_{v_1}}|\geq2$. If $x_1v_1\in E(G)$, then $v_1\in B_4^-$ and $x_1,y\in D_{A_4}$ by the proof of Claim \ref{uy}. Again, $B_4^-=\{v_1\}$ by Claim \ref{min} and $|D_{A_4}|\geq2$. Thus, we can reduce both cases to \eqref{DB}, as desired.


Let $D=D_B\cup\{y\}$. It follows from \eqref{x1v1uy} and \eqref{DB} that $N_D(v)=\emptyset$ for each $v\in D_B\setminus\{v_1\}$ as $G$ is $\{K_4^-,C_5^+\}$-free and $D_B$ is independent.
This implies that $d_{B\setminus D}(v)=d_B(v)=b(v)-1\geq1$, i.e., $B\setminus D\neq\emptyset$.
By Claim \ref{containment}, $B\setminus D$ is $(b-1)$-degenerate.
Thus there exists $z\in B\setminus D$ such that $d_{B\setminus D}(z)\leq b(z)-1$.
This together with $d_B(z)\geq b(z)$ gives that $N_D(z)\neq\emptyset$
and
\begin{eqnarray}\label{dBz}
d_B(z)=d_{B\setminus D}(z)+d_D(z)\leq b(z)-1+\sum_{x\in N_D(z)}\mu(x,z).
\end{eqnarray}
In what follows, we proceeds our proof by considering $N_D(z)$ according to \eqref{x1v1uy}.

\textbf{Case 1.} $x_1v_1\in E(G)$. By Claim \ref{uy}, we have
$y\in B^=$, $\mu(y)=1$, $d_B(y)=b(y)$ and $d_{B\setminus D_B}(y)=b(y)-1$.
We first establish the following easy but useful claim.
\begin{claim}\label{B=}
$(\mathrm{i})$ There exists $w\in N_{A^=}(x_1)$ such that
$uw\notin E(G)$, $\mu(x_1,w)=\mu(w)$ and $d_{A\setminus \{u,x_1\}}(w)=a(w)-1$.
$(\mathrm{ii})$ If there exists $y'\in N_{B^=}(y)$, then $v_1y'\in E(G)$.
\end{claim}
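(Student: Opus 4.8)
The plan is to deduce both parts from the ``$A^\ast$-form'' of Claim~\ref{containment} (degeneracy of a complement) together with the forbidden-subgraph hypotheses on $G$ and the auxiliary partition $(A_4,B_4)=(A\cup\{y\}\setminus\{u\},B\cup\{u\}\setminus\{y\})\in\mathscr{P}$ supplied, with the relations $x_1\in D_{A_4}$ and $v_1\in B_4^-$, in the proof of Claim~\ref{uy}.

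For $(\mathrm{i})$, I would first record that $A\setminus\{u,x_1\}\neq\emptyset$: otherwise $A=\{u,x_1\}$, so $d_A(x_1)=\mu(u,x_1)$, and combining this with $x_1\in A^=$ and $\mu(u,x_1)=\mu(x_1)$ gives $a(x_1)=1$, contradicting $a(x_1)\geq2$. Since $u\in A^-$, Claim~\ref{containment} with $A^\ast=\{u,x_1\}$ shows $A\setminus\{u,x_1\}$ is $(a-1)$-degenerate, so there is $w\in A\setminus\{u,x_1\}$ with $d_{A\setminus\{u,x_1\}}(w)\leq a(w)-1$; as $w\notin A^-$ we have $d_A(w)\geq a(w)+\mu(w)-1$, and therefore $\mu(u,w)+\mu(x_1,w)\geq\mu(w)$. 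The crux is to prove $uw\notin E(G)$. Suppose $uw\in E(G)$. If also $x_1w\in E(G)$, then the triangles $ux_1v_1$ and $ux_1w$ share the edge $ux_1$, so $\{u,x_1,v_1,w\}$ induces a subgraph containing $K_4^-$. If instead $x_1w\notin E(G)$, then $\mu(u,w)=\mu(w)$ and $d_A(w)=a(w)+\mu(w)-1$, so $w\in N_{A^=}(u)$ and $wuv_1y$ is a special path with respect to $(A,B)$; since $v_1\in D_B$ and $uy\notin E(G)$, Claim~\ref{diagonal} forces $v_1w\in E(G)$, and again $\{u,x_1,v_1,w\}$ induces a $K_4^-$. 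Both conclusions contradict the $K_4^-$-freeness of $G$, so $uw\notin E(G)$; it then follows that $\mu(x_1,w)=\mu(w)$, $d_A(w)=a(w)+\mu(w)-1$ and $d_{A\setminus\{u,x_1\}}(w)=a(w)-1$, that is, $w\in N_{A^=}(x_1)$ with the required properties.

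For $(\mathrm{ii})$, suppose toward a contradiction that $y'\in N_{B^=}(y)$ but $v_1y'\notin E(G)$. I would first argue $uy'\notin E(G)$: otherwise, using $uv_1,v_1y,yy'\in E(G)$ while $uy\notin E(G)$ and $v_1y'\notin E(G)$, the quadrilateral on $\{u,v_1,y,y'\}$ shares the edge $uv_1$ with the triangle $ux_1v_1$, contradicting the hypothesis that no quadrilateral of $G$ shares an edge with a triangle (equivalently, $\{u,x_1,v_1,y,y'\}$ induces a $C_5^+$). Now move to $(A_4,B_4)\in\mathscr{P}$: we already have $x_1\in D_{A_4}$, and since in Case~1 we have $\mu(y)=1$, $d_B(y)=b(y)$ and $uy\notin E(G)$, a short computation gives $d_{A_4}(y)=d_A(y)=d_G(y)-d_B(y)=a(y)-1$, so $y\in D_{A_4}$ too; hence $|A_4^-|\geq2$, and Claim~\ref{min} forces $B_4^-=\{v_1\}$. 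On the other hand, because $uy'\notin E(G)$ and $yy'\in E(G)$, we get $d_{B_4}(y')=d_B(y')-\mu(y,y')=b(y')+\mu(y')-1-\mu(y,y')\leq b(y')+\mu(y')-2$, so $y'\in B_4^-$; since $y'\in B^=$ whereas $v_1\in D_B$, we have $y'\neq v_1$, which contradicts $B_4^-=\{v_1\}$.

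The step I expect to be the main obstacle is ruling out $uw\in E(G)$ in $(\mathrm{i})$: one must check that $w$ genuinely inherits the role of $x_1$ (so that $wuv_1y$ really is a special path) before Claim~\ref{diagonal} can be applied, and the two adjacency cases must be handled separately. The rest reduces to routine bookkeeping with the degree identity~\eqref{=} and the positions of $u,x_1,v_1,y$ already established in $(A,B)$ and in $(A_4,B_4)$.
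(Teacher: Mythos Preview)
Your argument is correct in both parts.

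For $(\mathrm{i})$ your proof is essentially the same as the paper's. The paper writes it slightly more compactly: after producing $w$ with $d_{A\setminus\{u,x_1\}}(w)\leq a(w)-1$, it observes $|N_{\{u,x_1\}}(w)|=1$ directly from $K_4^-$-freeness (the triangle $ux_1v_1$ already uses the edge $ux_1$), and then invokes the ``moreover'' clause of Claim~\ref{diagonal} (which gives $N_{A^=}(u)=\{x_1\}$ from $x_1v_1\in E(G)$) to force $uw\notin E(G)$. You instead split into the two adjacency sub-cases and, in the case $uw\in E(G)$, $x_1w\notin E(G)$, apply the \emph{first} part of Claim~\ref{diagonal} to the path $wuv_1y$; this is exactly the content of the proof of that ``moreover'' clause, so the two arguments coincide.

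For $(\mathrm{ii})$ your route is genuinely different from the paper's, and a bit shorter. The paper swaps $u$ and $v_1$ to work in $(A_5,B_5)=(A\cup\{v_1\}\setminus\{u\},B\cup\{u\}\setminus\{v_1\})$, shows $x_1\in A_5^=$ and $y\in D_{B_5}$, and then applies Claim~\ref{diagonal} to the special path $x_1v_1yy'$ to force $v_1y'\in E(G)$. You instead swap $u$ and $y$, reusing the partition $(A_4,B_4)$ already set up in Claim~\ref{uy}; since $x_1,y\in D_{A_4}$ you get $B_4^-=\{v_1\}$ from Claim~\ref{min} directly, and the single computation $d_{B_4}(y')=d_B(y')-\mu(y,y')$ places $y'\in B_4^-$, a contradiction. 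Your approach trades a special-path/Claim~\ref{diagonal} argument for a size-one/Claim~\ref{min} argument and requires fewer degree computations; the paper's approach has the mild advantage of not relying on facts buried inside the proof of Claim~\ref{uy}.
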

\begin{proof}
$(\mathrm{i})$ Let $U=\{u,x_1\}$. Clearly, $A\setminus U\neq\emptyset$ as $d_{A\setminus U}(x_1)=d_{A\setminus\{u\}}(x_1)=a(x_1)-1\geq1$. By Claim \ref{containment}, $A\setminus U$ is $(a-1)$-degenerate, implying that there exists $w\in A\setminus U$ such that $d_{A\setminus U}(w)\leq a(w)-1$. It follows that $d_U(w)=d_A(w)-d_{A\setminus U}(w)\geq a(w)+\mu(w)-1-(a(w)-1)=\mu(w)\geq1$, i.e., $N_U(w)\neq\emptyset$. Thus $|N_U(w)|=1$ as $G$ is $K_4^-$-free, implying $d_U(w)\leq\mu(w)$. Then $d_U(w)=\mu(w)$, $d_A(w)=a(w)+\mu(w)-1$ and $d_{A\setminus U}(w)=a(w)-1$. Since $w\in A^=$ and $N_{A^=}(u)=\{x_1\}$ by Claim \ref{diagonal}, we have $uw\notin E(G)$, $x_1w\in E(G)$ and $\mu(x_1,w)=\mu(w)$.

$(\mathrm{ii})$ Suppose that $y'\in N_{B^=}(y)$ such that $v_1y'\notin E(G)$. Since $G$ is $\{K_4^-,C_5^+\}$-free, we have $x_1y,uy,uy'\notin E(G)$.
By Claim \ref{degree}, we have $(A_5,B_5):=(A\cup\{v_1\}\setminus\{u\},B\cup\{u\}\setminus\{v_1\})\in\mathscr{P}$ together with the following formulas: (i) $d_{A_5}(v_1)=d_A(v_1)-\mu(u,v_1)=a(v_1)+\mu(v_1)-2$;
(ii) $d_{B_5}(u)=d_B(u)-\mu(u,v_1)\leq b(u)+\mu(u)-2$;
(iii) $d_{A_5}(x_1)=d_A(x_1)+\mu(v_1,x_1)-\mu(u,x_1)\leq a(x_1)+\mu(x_1)-1$;
(iv) $d_{B_5}(y)=d_B(y)-\mu(v_1,y)=b(y)-1$;
(v) $d_{B_5}(y')=d_B(y')=b(y')+\mu(y')-1$.
It follows that $v_1\in A_5^-$, $u\in B_5^-$, $x_1\in A_5^-\cup A_5^=$, $y\in D_{B_5}\subseteq B_5^-$ and $y'\in B_5^=$. By Claim \ref{min}, $A_5^-=\{v_1\}$, implying $x_1\in A_5^=$.
Thus $x_1v_1yy'$ forms a special path with respect to $(A_5,B_5)$. By Claim \ref{diagonal}, either $x_1y\in E(G)$ or $v_1y'\in E(G)$ as $y\in D_{B_5}$, a contradiction.
\end{proof}

Now, we consider $N_D(z)$ and assert that $v_1\notin N_D(z)$. Otherwise, let $v_1z\in E(G)$. Clearly, $uw,uy,uz,wy,x_1y,wv_1,x_1z\notin E(G)$ and $N_{D_B}(z)=\{v_1\}$ as $G$ is $\{K_4^-,C_5^+\}$-free.
We focus on the partition $(A_4,B_4)=(A\cup\{y\}\setminus\{u\},B\cup\{u\}\setminus\{y\})\in\mathscr{P}$ defined in the second part of the proof of Claim \ref{uy}. Clearly, $x_1,y\in D_{A_4}\subseteq A_4^-$, $v_1\in B_4^-$ and $w\in A_4^=$ as $d_{A_4}(w)=d_A(w)=a(w)+\mu(w)-1$.
Note that $d_{B_4}(z)=d_B(z)-\mu(y,z)\leq b(z)-1+\sum_{x\in N_{D_B}(z)}\mu(x,z)$ by \eqref{dBz}.
It follows that $z\in B_4^=$ as $N_{D_B}(z)=\{v_1\}$ and $z\notin B_4^-$ by Claim \ref{min}.
Then $wx_1v_1z$ forms a special path with respect to $(A_4,B_4)$. By Claim \ref{diagonal}, either $wv_1\in E(G)$ or $x_1z\in E(G)$ as $x_1\in D_{A_4}$, a contradiction.
We further show that there exists $v\in D_B\setminus\{v_1\}$ such that $v\in N_D(z)$. Otherwise, $N_D(z)=\{y\}$. In view of \eqref{dBz}, we know $z\in B^-\cup B^=$. If $z\in B^-$, then $\{u,v_1,x_1,y,z\}$ contains a $C_5^+$ as $uz\in E(G)$ by Claim \ref{complete}. Thus, $z\in N_{B^=}(y)$, implying $v_1\in N_D(z)$ by Claim \ref{B=}(ii), a contradiction.
\begin{claim}\label{configuration}
$N_D(z)=\{v,y\}$ with $\mu(z)=1$ and $d_B(z)=b(z)+1$.
\end{claim}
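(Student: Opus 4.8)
The plan is to continue the analysis of Case~1 ($x_1v_1\in E(G)$), where by Claim~\ref{uy} we have $y\in B^=$, $\mu(y)=1$, $d_B(y)=b(y)$ and $d_{B\setminus D_B}(y)=b(y)-1$, together with $uy,x_1y\notin E(G)$; also $z\notin D_B$ forces $d_B(z)\ge b(z)$, hence $N_D(z)\ne\emptyset$, and we already know $v_1\notin N_D(z)$ while $v\in N_D(z)\cap(D_B\setminus\{v_1\})$. First I would collect local structure. For every $v'\in D_B\setminus\{v_1\}$ the edge $x_1v'$ is absent, since otherwise $\{u,x_1,v_1,v'\}$ spans a $K_4^-$ (using the edges $ux_1,uv_1,uv',x_1v_1,x_1v'$ and that $D_B$ is independent); hence Claim~\ref{exchange-x1v} applies and gives $\mu(v')=1$ together with an exchanged partition $(A_{v'},B_{v'}):=(A\cup\{v'\}\setminus\{x_1\},B\cup\{x_1\}\setminus\{v'\})\in\mathscr{P}$ in which $u\in A_{v'}^-$, $v'\in A_{v'}^=$ and $x_1\in B_{v'}^-$. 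In particular $\mu(v)=1$, so $\mu(v,z)=1$. I would also record that $x_1z\notin E(G)$ --- otherwise the cycle $x_1zvu$ is a quadrilateral sharing the edge $ux_1$ with the triangle $\{u,x_1,v_1\}$, which $G$ forbids --- and that if $z$ has two distinct neighbours in $D_B$ then $uz\notin E(G)$, for the two triangles through the edge $uz$ would span a $K_4^-$.

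The core of the proof is to establish $N_D(z)=\{v,y\}$. Since $N_{D_B}(y)=\{v_1\}$ and $D_B$ is independent, $N_D(z)$ is an independent set all of whose $D_B$-vertices lie in $B^-$ and are adjacent to $u$; hence $K_{2,3}$-freeness applied with parts $\{u,z\}$ and $N_D(z)\cap D_B$ gives $|N_D(z)\cap D_B|\le 2$. To exclude $|N_D(z)\cap D_B|=2$, say $v,v'\in N_D(z)$, I would pass to $(A_{v'},B_{v'})$: there $d_{B_{v'}}(v)=b(v)-1$ so $v\in D_{B_{v'}}$, and with $x_1\in B_{v'}^-$ this forces $A_{v'}^-=\{u\}$ by Claim~\ref{min}; then, using $d_{B_{v'}}(z)=d_B(z)-\mu(v',z)+\mu(x_1,z)=d_B(z)-1$ and distinguishing whether $z\in D_{B_{v'}}$, $z\in B_{v'}^=$, or neither, one reaches a contradiction either from Claim~\ref{complete} (which would force the forbidden edge $uz$) or from Claim~\ref{diagonal} applied to the special path $v'uvz$ (which would force $vv'\in E(G)$ or $uz\in E(G)$). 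An entirely parallel argument, now moving $v$ to the $A$-side via $(A_v,B_v)$ --- where the assumption $N_D(z)=\{v\}$ forces $d_B(z)=b(z)$ through \eqref{dBz} and hence $z\in D_{B_v}$ --- rules out $y\notin N_D(z)$; and the remaining possibility $N_D(z)=\{v,v',y\}$ is killed in the same way. This yields $N_D(z)=\{v,y\}$.

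It then remains to pin down $d_B(z)$ and $\mu(z)$. Since $\mu(v,z)=1$ and $\mu(y,z)\le\mu(y)=1$, we have $d_D(z)=2$, whence $b(z)\le d_B(z)=d_{B\setminus D}(z)+2\le b(z)+1$ from $d_{B\setminus D}(z)\le b(z)-1$. If $\mu(z)\ge 2$ or $d_B(z)=b(z)$, then a check of the inequalities shows $z\in B^-$ or $z\in B^=$: in the first case Claim~\ref{complete} gives $uz\in E(G)$, after which one more exchange combined with one of $K_4^-,C_5^+,K_{2,3}$ yields a contradiction; in the second case a special path through $z$ again contradicts Claim~\ref{diagonal}. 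Therefore $d_B(z)=b(z)+1$ and $\mu(z)=1$, as claimed. I expect the middle paragraph to be the main obstacle: there is a genuine proliferation of cases for $N_D(z)$ and, within each, for the class into which $z$ falls after the relevant exchange, and the delicate part is the bookkeeping of which vertices land in the ``$-$'', ``$=$'' and ``$D$'' parts of the new partition so that one of Claims~\ref{min}, \ref{complete}, \ref{diagonal} can be matched against an excluded subgraph.
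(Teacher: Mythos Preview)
Your overall plan—do an exchange via Claim~\ref{exchange-x1v}, then invoke Claims~\ref{min}, \ref{complete}, \ref{diagonal} against a forbidden subgraph—is the right shape, and your endgame for $\mu(z)$ and $d_B(z)$ (once $N_D(z)=\{v,y\}$ is known) is fine: with $yz\in E(G)$ the edge $uz$ really does complete a $C_5^+$ on $u,z,y,v_1,x_1$ with chord $uv_1$. The difficulty is earlier, in ruling out $N_D(z)=\{v\}$.

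There the ``entirely parallel argument'' breaks. Passing to $(A_v,B_v)$ you correctly get $z\in D_{B_v}$ and $x_1\in B_v^-$, hence $A_v^-=\{u\}$, and Claim~\ref{complete} then gives $uz\in E(G)$. But your only reason for regarding $uz$ as ``forbidden'' was the observation that two $D_B$-neighbours of $z$ force $uz\notin E(G)$; with a single $D_B$-neighbour that observation no longer applies, and from the edges you have recorded ($ux_1,uv_1,uv,uz,vz,x_1v_1,v_1y$ together with the non-edges $x_1v,x_1y,x_1z,uy,v_1v,v_1z,vy,yz$) no $K_4^-$, $C_5^+$, $K_{2,3}$ or $L_3$ is visible. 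A similar hole appears in the residual case $N_D(z)=\{v,v',y\}$ with $\mu(z)=1$ and $d_B(z)=b(z)+2$: then $d_{B_{v'}}(z)=b(z)+1$, so $z$ lands in neither $B_{v'}^-$ nor $B_{v'}^=$, and your trichotomy does not close.

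The missing idea is the vertex $w$ produced in Claim~\ref{B=}(i): there is $w\in N_{A^=}(x_1)$ with $uw\notin E(G)$ and $d_{A\setminus\{u,x_1\}}(w)=a(w)-1$. In $(A_v,B_v)$ one has $d_{A_v}(w)=a(w)-1$, so $w\in D_{A_v}$; together with $u\in A_v^-$ this gives $|A_v^-|\ge 2$, and Claim~\ref{min} now forces $B_v^-=\{x_1\}$ rather than merely $A_v^-=\{u\}$. With that in hand the case analysis collapses: if $|N_{D_B}(z)|=2$ then the second neighbour $v'$ satisfies $v'\in D_{B_v}\subseteq B_v^-$, contradicting $B_v^-=\{x_1\}$; and if $N_D(z)=\{v\}$ then $z\in D_{B_v}\subseteq B_v^-$, the same contradiction. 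This is exactly how the paper proceeds, and it sidesteps the cases where your sketch stalls. For the final step the paper uses Claim~\ref{B=}(ii) (applied with $y'=z$) to exclude $z\in B^=$; your $C_5^+$ argument is an acceptable alternative there.
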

\begin{proof}
Note that $1\leq|N_{D_B}(z)|\leq2$ as $G$ is $K_{2,3}$-free.  Note that $x_1v,x_1y,x_1z,wv,v_1v,vy\notin E(G)$ as $G$ is $\{K_4^-,C_5^+\}$-free.
By Claim \ref{exchange-x1v}, $\mu(v)=\mu(u,v)=1$ and $(A_{v},B_{v})\in\mathscr{P}$; moreover, $u\in A_{v}^-$ and $x_1\in B_{v}^-$.
Note also that $d_{A_{v}}(w)=d_A(w)-\mu(x_1,w)=d_{A\setminus\{u,x_1\}}(w)=a(w)-1$. Thus $u,w\in A_{v}^-$ and $x_1\in B_{v}^-$, implying $B_{v}^-=\{x_1\}$ by Claim \ref{min}.
If $|N_{D_B}(z)|=2$, then there exists $v'\in D_B\setminus\{v_1,v\}$ such that $x_1v',vv'\notin E(G)$ as $G$ is $K_4^-$-free. Note that $d_{B_{v}}(v')=d_B(v')=b(v')-1$, indicating $v'\in D_{B_{v}}\subseteq B_{v}^-$, a contradiction. Hence, $N_{D_B}(z)=\{v\}$. This implies that $1\leq|N_D(z)|\leq2$.
If $|N_D(z)|=1$, then $d_{B_{v}}(z)=d_B(z)-\mu(v,z)=d_{B\setminus D}(z)\leq b(z)-1$, thus $z\in D_{B_{v}}\subseteq B_{v}^-$, a contradiction.
Thus, we conclude that $N_D(z)=\{v,y\}$.
Observe that $z\in B\setminus B^-$; otherwise, $\{u,v_1,x_1,y,z\}$ contains a $C_5^+$ as $uz\in E(G)$ by Claim \ref{complete}.
Note that $\mu(v)=\mu(y)=1$ by Claims \ref{exchange-x1v} and \ref{uy} as $x_1v, uy\notin E(G)$. 
Hence, $b(z)+\mu(z)-1\leq d_B(z)\leq b(z)+1$ by \eqref{dBz}, giving that $\mu(z)\leq2$. If $\mu(z)=2$, then $d_B(z)=b(z)+1$ and $z\in B^=$.
It follows that $z\in N_{B^=}(y)$, implying $v_1z\in E(G)$ by Claim \ref{B=}(ii), a contradiction.
Hence, $\mu(z)=1$ and $z\notin B^=$, indicating $d_B(z)=b(z)+1$.
\end{proof}

\begin{figure}[ht]
\centering
\begin{picture}(-35.1,123)
\linethickness{0.1pt}
\put(-220,90){\circle*{3.0}\color[rgb]{0.00,0.00,0.00}\circle*{3.0}}
\put(-230,92){$u$}
\put(-180,103){\circle*{3.0}\color[rgb]{0.00,0.00,0.00}\circle*{3.0}}
\put(-184,110){$v$}
\put(-180,77){\circle*{3.0}\color[rgb]{0.00,0.00,0.00}\circle*{3.0}}
\put(-185,83){$v_1$}
\put(-220,50){\circle*{3.0}\color[rgb]{0.00,0.00,0.00}\circle*{3.0}}
\put(-223,39){$x_1$}
\put(-243,50){\circle*{3.0}\color[rgb]{0.00,0.00,0.00}\circle*{3.0}}
\put(-247,39){$w$}
\put(-180,50){\circle*{3.0}\color[rgb]{0.00,0.00,0.00}\circle*{3.0}}
\put(-183,39){$y$}
\put(-145,38){\circle*{3.0}\color[rgb]{0.00,0.00,0.00}\circle*{3.0}}
\put(-143,29){$z$}

\linethickness{0.25pt}
\polygon(-232,78)(-208,78)(-208,102)(-232,102)
\put(-252,88){\small $A^-$}

\linethickness{0.25pt}
\polygon(-252,33)(-208,33)(-208,62)(-252,62)
\put(-235,19){\small $A^=$}

\linethickness{0.25pt}
\polygon(-192,68)(-168,68)(-168,120)(-192,120)
\put(-162,96){\small $D_B$}

\linethickness{0.25pt}
\polygon(-192,33)(-168,33)(-168,62)(-192,62)
\put(-186,19){\small $B^=$}

\put(-214,-3){\bf $(A,B)$}

\put(-50,77){\circle*{3.0}\color[rgb]{0.00,0.00,0.00}\circle*{3.0}}
\put(-55,83){$u$}
\put(-50,103){\circle*{3.0}\color[rgb]{0.00,0.00,0.00}\circle*{3.0}}
\put(-55,109){$u'$}
\put(-10,90){\circle*{3.0}\color[rgb]{0.00,0.00,0.00}\circle*{3.0}}
\put(-4,89){$x_1$}
\put(-50,50){\circle*{3.0}\color[rgb]{0.00,0.00,0.00}\circle*{3.0}}
\put(-54,38){$v$}
\put(-85,38){\circle*{3.0}\color[rgb]{0.00,0.00,0.00}\circle*{3.0}}
\put(-88,24){$z'$}
\put(10,50){\circle*{3.0}\color[rgb]{0.00,0.00,0.00}\circle*{3.0}}
\put(9,39){$v_1$}
\put(-5,50){\circle*{3.0}\color[rgb]{0.00,0.00,0.00}\circle*{3.0}}
\put(-8,39){$y$}
\put(-20,50){\circle*{3.0}\color[rgb]{0.00,0.00,0.00}\circle*{3.0}}
\put(-24,39){$z$}

\linethickness{0.25pt}
\polygon(-62,68)(-38,68)(-38,120)(-62,120)
\put(-88,98){\small $D_{A_v}$}

\linethickness{0.25pt}
\polygon(-62,33)(-38,33)(-38,62)(-62,62)
\put(-56,18){\small $A_v^=$}

\linethickness{0.25pt}
\polygon(-20,78)(10,78)(10,102)(-20,102)
\put(16,88){\small $B_v^-$}

\linethickness{0.25pt}
\polygon(-27,33)(22,33)(22,62)(-27,62)
\put(-11,18){\small $B_v^=$}

\put(-55,-3){\bf $(A_v,B_v)$}

\put(125,77){\circle*{3.0}\color[rgb]{0.00,0.00,0.00}\circle*{3.0}}
\put(120,83){$y$}
\put(125,103){\circle*{3.0}\color[rgb]{0.00,0.00,0.00}\circle*{3.0}}
\put(120,109){$u$}
\put(165,90){\circle*{3.0}\color[rgb]{0.00,0.00,0.00}\circle*{3.0}}
\put(171,89){$v_1$}
\put(125,35){\circle*{3.0}\color[rgb]{0.00,0.00,0.00}\circle*{3.0}}
\put(121,21){$z'$}
\put(100,50){\circle*{3.0}\color[rgb]{0.00,0.00,0.00}\circle*{3.0}}
\put(97,39){$v$}
\put(185,50){\circle*{3.0}\color[rgb]{0.00,0.00,0.00}\circle*{3.0}}
\put(184,39){$x_1$}
\put(155,35){\circle*{3.0}\color[rgb]{0.00,0.00,0.00}\circle*{3.0}}
\put(152,23){$u'$}
\put(155,50){\circle*{3.0}\color[rgb]{0.00,0.00,0.00}\circle*{3.0}}
\put(159,50){$z$}

\linethickness{0.25pt}
\polygon(113,68)(137,68)(137,120)(113,120)
\put(87,98){\small $D_{A_7}$}

\linethickness{0.25pt}
\polygon(93,18)(133,18)(133,57)(93,57)
\put(71,35){\small $A_7^=$}

\linethickness{0.25pt}
\polygon(155,78)(185,78)(185,102)(155,102)
\put(190,88){\small $B_7^-$}

\linethickness{0.25pt}
\polygon(148,18)(197,18)(197,57)(148,57)
\put(202,35){\small $B_7^=$}

\put(120,-3){\bf $(A_7,B_7)$}

\linethickness{0.5pt}



\linethickness{0.7pt}
\Line(-220,90)(-180,77)
\Line(-220,90)(-180,103)
\Line(-220,90)(-220,50)
\Line(-180,77)(-220,50)
\Line(-180,77)(-180,50)
\Line(-180,103)(-145,38)
\Line(-220,50)(-243,50)
\Line(-180,50)(-145,38)

\Line(-10,90)(-50,77)
\Line(-10,90)(-50,103)
\Line(-10,90)(10,50)
\Line(-50,77)(10,50)
\Line(-50,77)(-50,50)
\Line(-50,103)(-85,38)
\Line(10,50)(-5,50)
\Line(-5,50)(-20,50)
\Line(-20,50)(-50,50)
\Line(-50,50)(-85,38)

\Line(125,103)(165,90)
\Line(125,103)(185,50)
\Line(125,103)(100,50)
\Line(125,77)(165,90)
\Line(125,77)(155,50)
\Line(165,90)(185,50)
\Line(185,50)(155,35)
\Line(155,35)(155,50)
\Line(155,35)(125,35)
\Line(155,50)(100,50)
\Line(100,50)(125,35)


\end{picture}
\caption{Partitions in $\mathscr{P}$}
\end{figure}
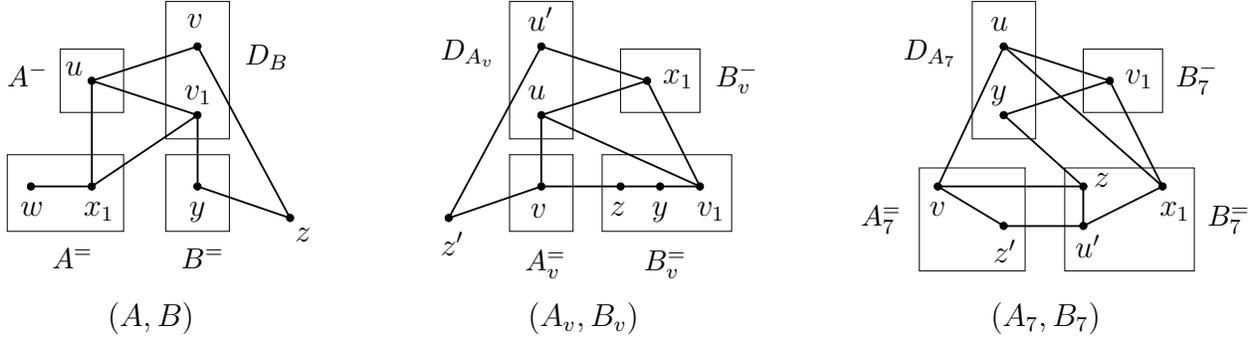


Note that $(A_{v},B_{v})\in\mathscr{P}$ by Claim \ref{exchange-x1v}; additionally, $u\in A_{v}^-$, $v\in A_{v}^=$ and $x_1\in B_{v}^-$. In what follows, we show that $B_v^-=\{x_1\}$, $u,w\in D_{A_v}$,
$v_1\in N_{B_v^=}(x_1)$ with $d_{B_v\setminus\{x_1\}}(v_1)=b(v_1)-1$,
$y\in N_{B_v^=}(v_1)$ with $d_{B_v\setminus\{x_1,v_1\}}(y)=b(y)-1$, and
$v\in N_{A_v^=}(u)$ with $d_{A_v\setminus D_{A_v}}(v)=a(v)-1$.
If so, we may view $B_{v}$, $A_{v}$ as the new parts $A$, $B$ by the symmetry between the functions $a,b$, and make sure that we are still in Case 1 as $v_1u\in E(G)$.

Recall that $\mu(v)=\mu(y)=1$. Since $G$ is  $\{K_4^-,C_5^+\}$-free, we have $x_1v,x_1y,vy,uy\notin E(G)$. Note that $d_{A_{v}}(w)=d_{A\setminus\{u,x_1\}}(w)=a(w)-1$ and
$d_{B_{v}}(v_1)=d_B(v_1)+\mu(x_1,v_1)=b(v_1)-1+\mu(x_1,v_1)\leq b(v_1)+\mu(v_1)-1$.
It follows that $w\in D_{A_{v}}$ and $v_1\in B_{v}^-\cup B_{v}^=$. Since $u,w\in A_{v}^-$ and $x_1\in B_{v}^-$, we have $B_v^-=\{x_1\}$ and $v_1\in B_{v}^=$ by Claim \ref{min}.
Thus $d_{B_{v}}(v_1)=b(v_1)+\mu(v_1)-1$ and $\mu(x_1,v_1)=\mu(v_1)$.
This implies that $d_{B_{v}\setminus\{x_1\}}(v_1)=d_{B_{v}}(v_1)-\mu(x_1,v_1)=b(v_1)-1$ and $d_{B_{v}\setminus\{x_1,v_1\}}(y)=d_B(y)-\mu(v_1,y)=b(y)-1$.
In addition, $N_{A_{v}^-}(v)=\{u\}$ as $G$ is $C_5^+$-free and $d_{A_v\setminus D_{A_v}}(v)=d_{A_v}(v)-\mu(u,v)=a(v)-1$.
It remains to show that $u\in D_{A_v}$. By Claim \ref{containment}, $A_v\setminus D_{A_v}$ is $(a-1)$-degenerate. Thus there exists $w'\in A_v\setminus D_{A_v}$ such that $d_{A_v\setminus D_{A_v}}(w')\leq a(w')-1$ and $|N_{D_{A_v}}(w')|=1$ by Claim \ref{y}. We may assume that $N_{D_{A_v}}(w')=\{u_1\}$ and $u\notin D_{A_v}$.
Clearly, $u_1v_1\notin E(G)$ and $w'\neq u$ as $G$ is $K_4^-$-free.
Now, we may view $B_{v}$, $A_{v}$ as the new parts $A$, $B$ by the symmetry between the functions $a,b$, and $x_1,u_1,v_1$ play roles in $(B_v,A_v)$ as that $u,v,x_1$ in the original partition $(A,B)$, respectively.
Let $A_6=A_v\cup\{v_1\}\setminus\{u_1\}$ and $B_6=B_v\cup\{u_1\}\setminus\{v_1\}$. By Claim \ref{exchange-x1v}, we have $\mu(u_1)=1$, $(A_6,B_6)\in\mathscr{P}$, $v_1\in A_6^-$ and $x_1\in B_6^-$.
Note that $d_{A_6}(w')=d_{A_v\setminus D_{A_v}}(w')\leq a(w')-1$ and $d_{B_6}(y)=d_{B_v}(y)-\mu(v_1,y)=b(y)-1$. Thus, $v_1,w'\in A_6^-$ and $x_1,y\in B_6^-$. This contradicts Claim \ref{min}. Hence, $u\in D_{A_v}$.

Now, we consider the partition $(B_v,A_v)$, which satisfies all the conditions of Case 1 by the above argument. We mention that $x_1, u, v_1, v, y$ play roles in $(B_v,A_v)$ as that $u, v_1, x_1, y, w$ in the original partition $(A,B)$, respectively. By Claim \ref{configuration}, we may assume that there exist $u'\in D_{A_v}\setminus\{u\}$ and $z'\in A_v\setminus(D_{A_v}\cup\{v\})$ such that $N_{D_{A_v}\cup\{u\}}(z')=\{v,u'\}$ , $\mu(u')=\mu(z')=1$ and $d_{A_v}(z')=a(z')+1$.

Let $A_7=A_v\cup\{y\}\setminus\{u'\}$ and $B_7=B_v\cup\{u'\}\setminus\{y\}$. Note that $u'y,u'u,u'v_1,u'v,x_1y,uy,vy\notin E(G)$ as $G$ is  $\{K_4^-,C_5^+\}$-free. Then we have the following equalities:
(i) $d_{A_7}(y)=d_{A_v}(y)=d_G(y)-d_{B_v}(y)=a(y)-1$;
(ii) $d_{A_7}(u)=d_{A_v}(u)=a(u)-1$;
(iii) $d_{A_7}(v)=d_{A_v}(v)=a(v)$;
(iv) $d_{B_7}(u')=d_{B_v}(u')=d_G(u')-d_{A_v}(u')=b(u')$;
(v) $d_{B_7}(x_1)=d_{B_v}(x_1)+\mu(u',x_1)=b(x_1)+\mu(x_1)-1$;
(vi) $d_{B_7}(v_1)=d_{B_v}(v_1)-\mu(v_1,y)=b(v_1)+\mu(v_1)-2$.
We claim that $(A_7,B_7)\in\mathscr{P}$.
Clearly, $A_7$ is $(a-1)$-meager. If not, then there is an $a$-nice subset $A'\subseteq A_7$. Since $A_v$ is $(a-1)$-meager, we have $y\in A'$ and $d_{A_7}(y)\geq d_{A'}(y)\geq a(y)+\mu(y)-1=a(y)$, a contradiction.
Now we prove that $B_7$ is $(b-1)$-meager. If not, then there is a $b$-nice subset $B'\subseteq B_7$. Since $B_v$ is $(b-1)$-meager, we have $u'\in B'$ and $d_{B_7}(u')\geq d_{B'}(u')\geq b(u')+\mu(u')-1=b(u')$. Thus, $d_{B_7}(u')=d_{B'}(u')=b(u')$, implying $x_1\in B'$ as $x_1u\in E(G)$.
Then, $d_{B_7}(x_1)\geq d_{B'}(x_1)\geq b(x_1)+\mu(x_1)-1$. It follows that $d_{B_7}(x_1)=d_{B'}(x_1)=b(x_1)+\mu(x_1)-1$, implying $v_1\in B'$ as $v_1x_1\in E(G)$. Hence, $d_{B_7}(v_1)\geq d_{B'}(v_1)\geq b(v_1)+\mu(v_1)-1$, a contradiction.
Thus, $(A_7,B_7)$ is an $(a-1,b-1)$-meager partition.
By \eqref{weight3} and \eqref{=}, $\omega(A_7,B_7)=\omega(A,B)$.
As claimed.


Note that $u,y\in D_{A_7}$, $v\in A_7^=$, $v_1\in B_7^-$ and $u',x_1\in B_7^=$. In what follows, we prove that $B_7^-=\{v_1\}$,
$x_1\in N_{B_7^=}(v_1)$ with $d_{B_7\setminus\{v_1\}}(x_1)=b(x_1)-1$, and
$v\in N_{A_7^=}(u)$ with $d_{A_7\setminus D_{A_7}}(v)=a(v)-1$,
If so, we may view $B_7$, $A_7$ as the new parts $A$, $B$ by the symmetry between the functions $a,b$, and again we are still in Case 1 as $x_1u\in E(G)$.

By Claim \ref{min}, $B_7^-=\{v_1\}$. Now, we show that $d_{B_7\setminus\{v_1\}}(x_1)=b(x_1)-1$. Note that $d_{B_7\setminus\{v_1\}}(x_1)=d_{B_7}(x_1)-\mu(v_1,x_1)= b(x_1)+\mu(x_1)-1-\mu(v_1,x_1)\geq b(x_1)-1$. It suffices to prove that $d_{B_7\setminus\{v_1\}}(x_1)\leq b(x_1)-1$. Suppose for a contradiction that $d_{B_7\setminus\{v_1\}}(x_1)> b(x_1)$.
By Claim \ref{containment}, $B_7\setminus\{v_1\}$ is $(b-1)$-degenerate as $G$ has no $(a,b)$-feasible pair. This implies that there exists $y''\in B_7\setminus\{v_1\}$ such that $d_{B_7\setminus\{v_1\}}(y'')\leq b(y'')-1$. Clearly, $y''\neq x_1$ and $d_{B_7}(y'')\geq b(y'')+\mu(y'')-1$.
Note also that $d_{B_7}(y'')=d_{B_7\setminus\{v_1\}}(y'')+\mu(v_1,y'')\leq b(y'')-1+\mu(y'')$.
Thus, $d_{B_7}(y'')=b(y'')+\mu(y'')-1$ and $y''\in B_7^=$.
Then $v u v_1 y''$ forms a special path with respect to $(A_7,B_7)$. By Claim \ref{diagonal}, either $v_1v\in E(G)$ or $uy''\in E(G)$ as $u\in D_{A_7}$. In either case, we have a $K_4^-$, a contradiction.
It remains to prove that $d_{A_7\setminus D_{A_7}}(v)=a(v)-1$.
By Claim \ref{complete}, we have $N_{D_{A_7}}(v)=\{u\}$ as $G$ is $C_5^+$-free. Thus, $d_{A_7\setminus D_{A_7}}(v)=d_{A_7}(v)-\mu(u,v)=a(v)-1$ by noting that $\mu(v)=1$. As desired.

Now, we consider the partition $(B_7,A_7)$, and $v_1, u, x_1, v$ play roles in $(B_7,A_7)$ as that $u, v_1, x_1, y$ in the original partition $(A,B)$, respectively. We show that $u'z,uz'\in E(G)$; if so, then $\{u,v,z,u',z'\}$ contains a $C_5^+$, a contradiction. Recall that $\mu(z)=1$ and $d_B(z)=b(z)+1$ by Claim \ref{configuration}. If $u'z\notin E(G)$, then $d_{B_7}(z)=d_{B_v}(z)-\mu(y,z)=d_B(z)-\mu(v,z)-\mu(y,z)=b(z)-1$, implying $z\in D_{B_7}$. Thus $u,y\in A_7^-$ and $v_1,z\in B_7^-$, contradicting Claim \ref{min}.
Next, we show $uz'\in E(G)$. Since $G$ is $K_{2,3}$-free, $yz'\notin E(G)$. Note that $\mu(z')=1$ and $d_{A_v}(z')=a(z')+1$. Thus $d_{A_7}(z')=d_{A_v}(z')-\mu(u',z')=a(z')$, implying $z'\in A_7^=$. By Claim \ref{B=}(ii), $uz'\in E(G)$ as $z'\in N_{A_7^=}(v)$. Thus we complete the proof of Case 1.

\textbf{Case 2.} $uy\in E(G)$. Clearly, $x_1v_1\notin E(G)$ and $N_{D_B}(y)=\{v_1\}$. By Claims \ref{exchange-x1v} and \ref{uy}, $\mu(v_1)=\mu(x_1)=1$.
Note that $1\leq|N_D(z)|\leq2$ as $G$ is $K_{2,3}$-free.
If $|N_D(z)|=2$, then $yz\in E(G)$; otherwise, we have $z\in B\setminus D_B$ such that $d_{B\setminus D_B}(z)\leq b(z)-1$, implying $|N_{D_B}(z)|=1$ by Claim \ref{y}, a contradiction. It follows that $v_1z\notin E(G)$ as $G$ is $K_4^-$-free. Thus there exists $v\in D_B\setminus\{v_1\}$ such that $vz\in E(G)$ and $\{u,v,v_1,y,z\}$ contains a $C_5^+$, a contradiction. Hence, $|N_D(z)|=1$ and $d_B(z)\leq b(z)-1+\mu(z)$ by \eqref{dBz}.
\begin{claim}\label{NDz-v2}
$N_D(z)=\{v_2\}$ for some $v_2\in D_B\setminus\{v_1\}$.
\end{claim}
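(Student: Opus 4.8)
The plan is to pin down $N_D(z)$. Since $|N_D(z)|=1$ and $D=D_B\cup\{y\}$ have already been established, it suffices to exclude $N_D(z)=\{v_1\}$ and $N_D(z)=\{y\}$: whatever remains is a singleton $\{v_2\}$ with $v_2\in D_B\setminus\{v_1\}$, which is the assertion. Throughout one uses freely that we are in Case 2, so $uy\in E(G)$, together with $v_1\in D_B$, $uv_1,v_1y\in E(G)$, $x_1v_1,x_1y\notin E(G)$ and $\mu(v_1)=\mu(x_1)=1$.

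\emph{Excluding $N_D(z)=\{v_1\}$.} Using $\mu(v_1)=1$ and \eqref{dBz}, one gets $d_B(z)\le b(z)$, hence $d_B(z)=b(z)$ since $z\notin D_B$; thus $z\in B^-$ when $\mu(z)\ge2$ and $z\in B^=$ when $\mu(z)=1$. In the first case Claim \ref{complete} (with $A^-=\{u\}$) gives $uz\in E(G)$; in the second, $x_1uv_1z$ is a special path with $v_1\in D_B$, so Claim \ref{diagonal}, together with $x_1v_1\notin E(G)$, again gives $uz\in E(G)$. In either case, recalling $uv_1,uy,v_1y\in E(G)$ and $yz\notin E(G)$, the set $\{u,v_1,y,z\}$ contains a $K_4^-$, a contradiction.

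\emph{Excluding $N_D(z)=\{y\}$.} First I would observe $x_1z\notin E(G)$, since otherwise $x_1uv_1yzx_1$ is a $5$-cycle with the chord $uy$, a $C_5^+$. Then I would pass to the partition $(A_{v_1},B_{v_1})\in\mathscr{P}$ of Claim \ref{exchange-x1v}, in which $A_{v_1}^-=\{u\}$, $x_1,y\in B_{v_1}^-$, $y\in D_{B_{v_1}}$ and $v_1\in A_{v_1}^=$. As $v_1z,x_1z\notin E(G)$, the degree of $z$ is unchanged: $d_{B_{v_1}}(z)=d_B(z)\le b(z)+\mu(z)-1$, so $z\in B_{v_1}^-\cup B_{v_1}^=$. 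If $z\in B_{v_1}^-$, then Claim \ref{complete} in $(A_{v_1},B_{v_1})$ gives $uz\in E(G)$. If $z\in B_{v_1}^=$, then $v_1uyz$ is a special path with $y\in D_{B_{v_1}}$, and one pushes Claim \ref{diagonal} (which by itself only records $N_{A_{v_1}^=}(u)=\{v_1\}$, since the alternative $v_1y\in E(G)$ holds automatically), supplemented by a further exchange via Claims \ref{exchange} and \ref{degree}, to force $\mu(z)=1$ and then $uz\in E(G)$ (or a forbidden subgraph outright). Once $uz\in E(G)$, the set $\{u,v_1,y,z\}$ contains a $K_4^-$ (all pairs adjacent except $v_1z$), a contradiction.

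The main obstacle is the subcase $z\in B_{v_1}^=$ in the second part, where Claim \ref{diagonal} is tight and the missing edge $uz$ must be extracted by iterating the exchange machinery (effectively pushing $(A_{v_1},B_{v_1})$ into the configuration already handled in Case 1); everything else is short degree bookkeeping based on \eqref{=}, \eqref{dBz} and the identities already recorded in Claims \ref{degree}, \ref{exchange-x1v} and \ref{uy}.
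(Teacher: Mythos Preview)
Your treatment of $N_D(z)=\{v_1\}$ is correct and coincides with the paper's. For $N_D(z)=\{y\}$, passing to $(A_{v_1},B_{v_1})$ is also the right move, and your subcase $z\in B_{v_1}^-$ is fine. The genuine gap is the subcase $z\in B_{v_1}^=$.

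There your stated goal---to force $uz\in E(G)$ via ``a further exchange''---is not what the paper does, and I do not see how it can be achieved: as you note, Claim~\ref{diagonal} applied to $v_1uyz$ is vacuous because $v_1y\in E(G)$, and no single further application of Claim~\ref{degree} or~\ref{exchange} yields $uz$. The paper does not attempt this. Instead it verifies that in $(A_{v_1},B_{v_1})$ the vertices $v_1,u,y,z$ satisfy exactly the hypotheses that $x_1,u,v_1,y$ satisfy in the original setup: $A_{v_1}^-=\{u\}$; $v_1\in N_{A_{v_1}^=}(u)$ with $d_{A_{v_1}\setminus\{u\}}(v_1)=a(v_1)-1$; $y\in D_{B_{v_1}}$; and $z\in B_{v_1}\setminus D_{B_{v_1}}$ with $N_{D_{B_{v_1}}}(z)=\{y\}$ and $d_{B_{v_1}\setminus D_{B_{v_1}}}(z)=b(z)-1$ (these last checks use that $z\in B^=$, $\mu(y,z)=\mu(z)$, $x_1z,v_1z\notin E(G)$, and $C_5^+$-freeness). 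Since $v_1y\in E(G)$, the dichotomy~\eqref{x1v1uy} for the new partition places us in Case~1, which has already been driven to a contradiction. So your parenthetical ``effectively pushing $(A_{v_1},B_{v_1})$ into the configuration already handled in Case~1'' is not a supplement for extracting $uz$---it \emph{is} the whole argument, and the contradiction comes from the completed Case~1 analysis, not from a $K_4^-$ on $\{u,v_1,y,z\}$. What is missing from your proposal is precisely the role identification and the degree verifications above.
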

\begin{proof}
Suppose not. Clearly, $z\in B^=$ as $G$ is $K_4^-$-free.
It follows that $d_{B\setminus D}(z)=b(z)-1$ and $d_D(z)=\mu(z)$.
If $N_D(z)=\{v_1\}$, then $x_1uv_1z$ forms a special path with respect to $(A,B)$. Since $v_1\in D_B$, either $x_1v_1\in E(G)$ or $uz\in E(G)$ by Claim \ref{diagonal}, implying a $K_4^-$ in both cases, a contradiction.
If $N_D(z)=\{y\}$, then $d_B(z)=b(z)+\mu(z)-1$ and $\mu(y,z)=\mu(z)$.
Since $G$ is $\{K_4^-,C_5^+\}$-free, we have $x_1v_1,x_1y,x_1z,v_1z\notin E(G)$.
By Claim \ref{exchange-x1v}, $(A_{v_1},B_{v_1})\in\mathscr{P}$, $u\in A_{v_1}^-$, $v_1\in A_{v_1}^=$ and $x_1\in D_{B_{v_1}}\subseteq B_{v_1}^-$.
Note that $d_{B_{v_1}}(y)=d_B(y)-\mu(v_1,y)=d_{B\setminus D_B}(y)\leq b(y)-1$. It follows that $y\in D_{B_{v_1}}\subseteq B_{v_1}^-$. Thus, $A_{v_1}^-=\{u\}$ by Claim \ref{min}.
Since $G$ is $C_5^+$-free, we have $N_{D_{B_{v_1}}}(z)=\{y\}$. Thus $d_{B_{v_1}\setminus D_{B_{v_1}}}(z)=d_{B_{v_1}}(z)-\mu(y,z)=d_B(z)-\mu(y,z)=b(z)-1$.
Moreover, $v_1\in A_{v_1}^=$ with $d_{A_{v_1}\setminus\{u\}}(v_1)=d_{A_{v_1}}(v_1)-\mu(u,v_1)=a(v_1)-1$.
Now, we view $A_{v_1}$ $B_{v_1}$ as the new parts $A$, $B$ and the case can be reduced to Case 1 as $v_1y\in E(G)$.
In fact, $v_1,u,y,z$ play roles in $(A_{v_1},B_{v_1})$ as that $x_1,u,v_1,y$ in the partition $(A,B)$ of Case 1, respectively.
\end{proof}

Let $Z:=\{z^*\in B\setminus D:d_{B\setminus D}(z^*)\leq b(z^*)-1\}$. Clearly, $z\in Z\subseteq B^-\cup B^=$. By Claim \ref{NDz-v2}, for each $z^*\in Z$, we may assume that $N_D(z^*)=\{v^*\}$ for some $v^*\in D_B\setminus\{v_1\}$. Now, we show $uz^*\in E(G)$ for each $z^*\in Z$. If $z^*\in B^-$, then we're done by Claim \ref{complete}. Thus, $z^*\in B^=$ and $x_1uv^*z^*$ forms a special path with respect to $(A,B)$. By Claim \ref{diagonal}, either $x_1v^*\in E(G)$ or $uz^*\in E(G)$. If $x_1v^*\in E(G)$, then the case can be reduced to Case 1, where $z^*$ and $v^*$ play the roles of $y$ and $v_1$. Thus we conclude that $uz^*\in E(G)$ for each $z^*\in Z$.

Note that $N_{D\cup Z}(y)=N_{D_B}(y)$ as $yz^*\notin E(G)$ for each $z^*\in Z$. Thus $d_{B\setminus(D\cup Z)}(y)=d_{B\setminus D_B}(y)=b(y)-1\geq1$, i.e., $B\setminus(D\cup Z)\neq\emptyset$. By Claim \ref{containment}, $B\setminus(D\cup Z)$ is $(b-1)$-degenerate. Hence, there exists $z'\in B\setminus(D\cup Z)$ such that $d_{B\setminus(D\cup Z)}(z')\leq b(z')-1$, implying $|N_{D\cup Z}(z')|\geq1$ by noting that $d_B(z')\geq b(z')$.
Since $u$ is adjacent to each vertex in $D\cup Z$, we have $|N_{D\cup Z}(z')|\leq2$ as $G$ is $K_{2,3}$-free.
If $|N_{D\cup Z}(z')|=2$, then $N_{D\cup Z}(z')\nsubseteq D_B$ by Claim \ref{y}. It is easy to check that $G$ contains a $K_4^-$ or $C_5^+$, a contradiction.
Let $N_{D\cup Z}(z')=\{y'\}$.
If $y'\in D$, then $d_{B\setminus D}(z')=d_{B\setminus(D\cup Z)}(z')\leq b(z')-1$, indicating $z'\in Z$, a contradiction.
Thus $y'\in Z$ and $d_{B\setminus(D_B\cup\{y'\})}(z')=d_{B\setminus(D\cup Z)}(z')\leq b(z')-1$.
Now, we may view $y'$, $z'$ and $D_B\cup\{y'\}$ as the new $y$, $z$ and $D$, respectively. Since $uy'\in E(G)$, we are still in Case 2. By Claim \ref{NDz-v2}, we have $N_{D_B\cup\{y'\}}(z')\subseteq D_B$. This leads to a contradiction as $y'\notin D_B$, completing the proof of Case 2. Thus, we complete the proof of Theorem \ref{multigraph}.

\section*{Acknowledgements}
The authors would like to thank Jie Ma for helpful comments and discussions.

\end{document}